  \newcommand{\n}{{\bf n}}
\newcommand{\cX}{\mathcal{X}}
\newcommand{\G}{\mathcal{G}}
\newcommand{\X}{\mathcal{X}}
\newcommand{\cI}{\mathcal{I}}
\newcommand{\CI}{\mathcal{I}}
\newcommand{\CZ}{\mathcal{Z}}
\newcommand{\Y}{\mathcal{Y}}
\newcommand{\cZ}{\mathcal{Z}}
\newcommand{\gG}{\Gamma}
\newcommand{\C}{\mathbb{C}}
\newcommand{\R}{\mathbb{R}}
\newcommand{\E}{\mathbb{E}}
\newcommand{\N}{\mathbb{N}}
\newcommand{\Z}{\mathbb{Z}}
\renewcommand{\n}{{\bf n}}
\newcommand{\uepsilon}{\bm{\epsilon}}
\DeclareMathOperator{\vdc}{-vdC}
\newcommand{\norm}[1]{\left\Vert #1\right\Vert}
\newcommand{\nnorm}[1]{\lvert\!|\!| #1|\!|\!\rvert}
\theoremstyle{plain}
\newtheorem{theorem}{Theorem}[section]
\newtheorem{lemma}[theorem]{Lemma}
\newtheorem{proposition}[theorem]{Proposition}
\newtheorem*{theoremA'}{Theorem A'}
\newtheorem*{theoremB'}{Theorem B'}
\newtheorem*{theoremC'}{Theorem C'}
\newtheorem{problem}{Problem}
\newtheorem*{theorem*}{Theorem}
\newtheorem{corollary}[theorem]{Corollary}
\theoremstyle{definition}
\theoremstyle{remark}
\begin{document}
\title[\tiny{Pointwise convergence for cubic and polynomial multiple ergodic averages }]{Pointwise convergence for cubic and polynomial multiple ergodic averages  of non-commuting transformations}
\author{Qing Chu}
\address[Qing Chu]{Universit\'e Paris-Est Marne-la-Vall\'ee, Laboratoire d'analyse et de math\'ematiques appliqu\'ees, UMR
CNRS 8050, 5 Bd Descartes, 77454 Marne la Vall\'ee Cedex, France} \email{qing.chu@univ-mlv.fr}

\author{Nikos Frantzikinakis}
\address[Nikos Frantzikinakis]{University of Crete, Department of mathematics, Knossos Avenue, Heraklion 71409, Greece} \email{frantzikinakis@gmail.com}

\begin{abstract}
We  study the limiting behavior of multiple ergodic averages involving several not necessarily commuting measure preserving transformations. We work on two types of averages, one that uses iterates
along combinatorial
parallelepipeds, and another that uses iterates along  shifted polynomials. We prove pointwise convergence in
 both cases, thus answering  a question of
 I.~Assani in the former case, and  extending results of B.~Host-B.~Kra and A.~Leibman in the latter case.
 Our argument is based on  some elementary uniformity estimates of general bounded sequences,   decomposition results in ergodic theory, and equidistribution results on nilmanifolds.
\end{abstract}


\thanks{The  second author was partially supported by
 Marie Curie IRG  248008.}

\subjclass[2000]{Primary: 37A45; Secondary: 28D05, 11B30. }


\maketitle

\section{Main results}
In this paper we study the limiting behavior, in the mean and pointwise,  of  multiple ergodic averages involving
 measure preserving transformations that do not necessarily commute. We focus our attention on two such types, special cases of which
 have previously attracted some attention.
 One involves  iterates taken along combinatorial parallelepipeds, and the other involves iterates taken along shifted polynomials.
\subsection{Cubic Averages}
For $k\in\N$ we set
 $$
 V_k\mathrel{\mathop:}=\{0,1\}^k \quad \text{and} \quad
 V^*_k\mathrel{\mathop:}=V_k \setminus \{ {\bf 0}\}
 $$
where  ${\bf 0}\mathrel{\mathop:}=(0,0,\cdots, 0)$.
Let $(X,\X,\mu)$ be a probability space\footnote{Throughout the text all probability spaces are assumed to be Lebesgue.},
and for $\uepsilon\in V^*_k$ let
   $T_{\uepsilon}\colon X\to X$ be  measure preserving transformations and  $f_{\uepsilon}\in L^\infty(\mu)$ be functions.
We are going to study the limiting behavior of certain multiple ergodic averages taken along $k$-dimensional combinatorial parallelepipeds of iterates of the transformations $T_{\uepsilon}$.
More precisely, the \emph{cubic averages of dimension $k$} are given by
\begin{equation}\label{E:A_k}
A_{k,N}(T_{\uepsilon},f_{\uepsilon})(x)\mathrel{\mathop:}=\frac{1}{N^k}\sum_{\n\in [1,N]^k} \prod_{{\uepsilon}\in V^*_k} f_{\uepsilon}(
T_{\uepsilon}^{{\uepsilon}\cdot \n}x)
\end{equation}
where for $\uepsilon=(\epsilon_1,\ldots,\epsilon_k)\in V_k$ and  $\n=(n_1,\ldots,n_k)\in \N^k$   we define
$$
{\uepsilon} \cdot \n\mathrel{\mathop:}=\epsilon_1n_1+\cdots +\epsilon_kn_k.
$$
For instance, the cubic averages of dimension $1$ are the ergodic averages,
the  cubic averages of dimension $2$ are defined  by
 $$
 \frac{1}{N^2} \sum_{1\leq m,n\leq N} f_1(T_1^{m}x)\cdot f_2(T_2^{n}x)\cdot f_3(T_3^{m+n}x),
 $$
and  the  cubic averages of dimension $3$ are similarly defined, using iterates of $7$ transformations taken
along the combinatorial parallelepipeds $m,n,r,m+n,m+r,n+r,m+n+r$.

The averages $A_{k,N}(T_{\uepsilon},f_{\uepsilon})(x)$  are closely linked to
 the Gowers-Host-Kra seminorms
$\nnorm{\cdot }_k$ that have been used extensively in ergodic theory to find convenient majorants for various other multiple ergodic averages. In \cite{HKa} it is shown that for ergodic systems $(X,\X,\mu,T)$, and  real valued functions $f\in L^\infty(\mu)$, we have
$$
\nnorm{f}_{k}^{2^{k}}=\lim_{N\to\infty}\int f\cdot A_{k,N}(T,f)\ d\mu
$$
where $A_{k,N}(T,f)$ is defined by letting $T_{\uepsilon}=T$ and $f_{\uepsilon}=f$  in \eqref{E:A_k}
for every ${\uepsilon}\in V^*_k$. This identity also holds for non-ergodic systems once the seminorms $\nnorm{\cdot}_k$ are appropriately defined.

The study of the limiting behavior of the averages \eqref{E:A_k} was initiated  by V. Bergelson in \cite{Be00}, where convergence in $L^2(\mu)$ was shown in dimension $2$ under the extra assumption that all the transformations are equal. Under the same assumption, Bergelson's result
was   extended by B.~Host and B.~Kra for cubic averages of dimension $3$ in \cite{HK04}, and for
arbitrary dimension $k$ in \cite{HKa}.
More recently  in \cite{As},  I.~Assani
established pointwise convergence for cubic averages of  arbitrary dimension $k$ when all
the transformations are equal. In the same article, and prior to this in \cite{As03} and \cite{As09}, convergence  for general, not necessarily
 commuting transformations, was studied for the first time.
 Pointwise convergence   was established for $2$-dimensional averages, and
some partial results were obtained for dimensions greater than $2$, including convergence when all the transformations are weak mixing. In this article we complete this study by proving  pointwise convergence for the cubic averages  of  arbitrary dimension.
\begin{theorem}\label{T:2}
Let $k\in \N$, $(X,\X,\mu)$ be a probability space,  and for ${\uepsilon}\in V^*_k$ let $T_{\uepsilon}\colon X\to X$ be measure preserving transformations,
and $f_{\uepsilon} \in L^\infty(\mu)$ be functions.
Then  the cubic averages of dimension $k$, given by  \eqref{E:A_k}, converge pointwise as $N\to \infty$.
\end{theorem}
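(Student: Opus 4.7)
\medskip
\noindent\textbf{Proof plan.}

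I would proceed by induction on $k$, with $k=1$ being Birkhoff's pointwise ergodic theorem. For the induction step, the strategy, as signalled by the abstract, combines a uniformity estimate for general bounded sequences, a Host--Kra type decomposition applied to each function separately, and equidistribution on nilmanifolds. The first step is a purely combinatorial statement: for bounded sequences $(a^{(\uepsilon)})_{\uepsilon\in V_k^*}$ on $\Z$ and any fixed $\uepsilon_0\in V_k^*$,
$$
\Bigl|\tfrac{1}{N^k}\sum_{\n\in [1,N]^k}\prod_{\uepsilon\in V_k^*} a^{(\uepsilon)}(\uepsilon\cdot \n)\Bigr|
\;\le\; C_k\prod_{\uepsilon\ne \uepsilon_0}\|a^{(\uepsilon)}\|_\infty \cdot \|a^{(\uepsilon_0)}\|_{U^k[N]}^{c_k},
$$
obtained by iterated Cauchy--Schwarz / van der Corput, one coordinate at a time. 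No dynamics is involved here; only the commutativity of shifts of a single sequence is used. Applying this pointwise to the sequences $a^{(\uepsilon)}(n) \mathrel{\mathop:}= f_{\uepsilon}(T_{\uepsilon}^n x)$ yields, for $\mu$-a.e.\ $x$, a bound on $|A_{k,N}(x)|$ by a Gowers norm on $[1,N]$ of a single orbit of $T_{\uepsilon_0}$.

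Next, for each $\uepsilon_0$ in turn, apply the Host--Kra structure theorem to the \emph{single}-transformation system $(X,\X,\mu,T_{\uepsilon_0})$: given $\delta>0$, write $f_{\uepsilon_0}=f_s+f_u$ with $f_s$ measurable with respect to the $(k-1)$-step pronilfactor of $T_{\uepsilon_0}$ and $\nnorm{f_u}_k<\delta$ (the seminorm taken in $(X,T_{\uepsilon_0})$). A pointwise version of Host--Kra's theorem gives that for a.e.\ $x$ the orbital $U^k[1,N]$-norm of $n\mapsto f_u(T_{\uepsilon_0}^n x)$ is small for all large $N$, so by the uniformity estimate the $f_u$-part contributes negligibly to $A_{k,N}(x)$. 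Iterating this decomposition across all $\uepsilon\in V_k^*$ reduces the problem to the case where, for a.e.\ $x$, every orbit $n\mapsto f_{\uepsilon}(T_{\uepsilon}^n x)$ is a $(k-1)$-step nilsequence $F_{\uepsilon}(b_{\uepsilon}(x)^n\Gamma_{\uepsilon})$ on some nilmanifold $G_{\uepsilon}/\Gamma_{\uepsilon}$.

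In this reduced case, for a.e.\ $x$,
$$
A_{k,N}(x) \;=\; \tfrac{1}{N^k}\sum_{\n\in [1,N]^k}\prod_{\uepsilon\in V_k^*} F_{\uepsilon}\bigl(b_{\uepsilon}(x)^{\uepsilon\cdot \n}\Gamma_{\uepsilon}\bigr),
$$
a linear $k$-dimensional ergodic average on the product nilmanifold $\prod_{\uepsilon} G_{\uepsilon}/\Gamma_{\uepsilon}$ along the orbit $\n\mapsto (b_{\uepsilon}(x)^{\uepsilon\cdot \n}\Gamma_{\uepsilon})_{\uepsilon}$. Leibman's equidistribution theorem on nilmanifolds identifies the orbit closure and yields pointwise convergence of this average to an explicit integral. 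The delicate point of the whole argument is \emph{pointwise} control: the uniformity estimate in the first step must be turned into an a.e.\ bound (not just an $L^2$ one), and the Host--Kra decomposition must produce errors whose orbital Gowers norms vanish a.e. Reconciling these, most likely via a maximal inequality for cubic averages together with a pointwise inverse theorem for $\nnorm{\cdot}_k$ along a single orbit, is the technical core of the argument and the step I expect to be the main obstacle.
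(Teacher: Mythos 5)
Your overall architecture coincides with the paper's: an iterated van der Corput / Gowers--Cauchy--Schwarz estimate bounding the cubic average of arbitrary bounded sequences by a Gowers-type norm of a single sequence, a structured-plus-uniform decomposition applied to each $T_{\uepsilon}$ separately, and Leibman's theorem for the resulting product of nilsequences. But the step you explicitly defer as ``the main obstacle'' --- converting $\nnorm{f^u}_{k,T_{\uepsilon_0}}=0$ into a.e.\ smallness of the orbital Gowers norms $\nnorm{n\mapsto f^u(T_{\uepsilon_0}^nx)}_{U_k(\Z_N)}$ --- is left genuinely open, and the route you guess at (a maximal inequality for cubic averages plus a pointwise inverse theorem for $\nnorm{\cdot}_k$ along an orbit) is neither how the paper does it nor necessary. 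The paper's resolution is elementary: it introduces the iterated-limsup quantity $\nnorm{a}_{U_k(\N)}$ (limsup over $n_1$, then $n_2,\ldots$, then over the inner average in $m$), proves by induction on $k$ via van der Corput that $\limsup_{N}\nnorm{a}_{U_k(\Z_N)}\ll_k\nnorm{a}_{U_k(\N)}$ (Proposition~\ref{P:U_k}), and then notes that for an orbit sequence the inner averages $\E_{m\in[1,N]}\prod_{\uepsilon}\mathcal{C}^{|\uepsilon|}f(T^{m+\uepsilon\cdot\n}x)$ converge a.e.\ for each fixed $\n$ by the ordinary pointwise ergodic theorem (only countably many $\n$ are involved), so that $\nnorm{f(T^nx)}_{U_k(\N)}=\nnorm{f}_{k,\mu_x}$ a.e.\ (Proposition~\ref{P:RelToErgodic}). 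No maximal inequality or inverse theorem enters.

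The second gap is in your decomposition. Taking $f_s=\E(f|\cZ_{k-1,T_{\uepsilon_0}})$ does \emph{not} make $n\mapsto f_s(T_{\uepsilon_0}^nx)$ a nilsequence: the pronilfactor is only an inverse limit of nilsystems on each ergodic component, and a merely measurable function on such an inverse limit need not have nilsequence orbits. One needs a three-term decomposition $f=f^s+f^u+f^e$ with $\nnorm{f^u}_k=0$, $\norm{f^e}_{L^1(\mu)}\le\varepsilon$, and $f^s$ with genuinely nilsequence orbits a.e.\ (Proposition~\ref{P:ApprNil}, imported from \cite{CFH09}), and then the contribution of $f^e$ must be controlled \emph{pointwise}. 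This is where the quantitative form of the uniformity estimate is indispensable: Corollary~\ref{C:key'} bounds the averages by $\prod_{\uepsilon}\nnorm{f_{\uepsilon}}_{k,\mu_{x,T_{\uepsilon}}}$, one uses $\nnorm{g}_k\le\norm{g}_{L^1}$ for bounded $g$, and a subsequence argument makes $\int|f^e_{m_l}|\,d\mu_{x,T_{\uepsilon}}\to0$ a.e.\ simultaneously for all $\uepsilon$ --- a point that matters precisely because the ergodic decompositions with respect to the different, non-commuting $T_{\uepsilon}$ cannot be taken to be the same. (The top-level induction on $k$ you propose is unnecessary; the paper handles each $k\ge2$ directly.)
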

It is interesting to
 contrast the limiting behavior  of the cubic averages with some other similar looking averages.
 To begin with, the
 averages
  $ \frac{1}{(N-M)^2}\sum_{M<m,n\leq N} f_1(T_1^mx)\cdot f_2(T_2^nx)\cdot f_3(T_3^{m+n}x)$, and their higher dimensional relatives,
do not in general   converge pointwise (for an example when $f_2$=$f_3$=$1$ see \cite{JR79}). On the other hand, our argument
can be easily adapted to prove convergence in $L^2(\mu)$ for such averages.
As for  the   averages
$
\frac{1}{N^2}\sum_{1\leq m,n\leq N} f_1(T^nx)\cdot f_2(S^mx)\cdot  f_3(T^nS^mx),
$
and the ``diagonal averages"
$
\frac{1}{N}\sum_{n=1}^N f_1(T^nx)\cdot f_2(S^nx),
$
it is known that they do not converge in general, even in $L^2(\mu)$,   unless one makes some commutativity assumption about the transformations $T$ and $S$
(for counterexamples, see \cite{Lei02b} for the former, and
 \cite{Bere85} or   \cite{BL04}  for the latter).
 In fact, even under the assumption that all transformations commute, pointwise convergence
 of these averages
 and their higher dimensional relatives is not known.

A key concept that underlies the convergence result of Theorem~\ref{T:2}
is   the \emph{characteristic factors}, meaning a collection of
$T_{\uepsilon}$-invariant sub-$\sigma$-algebras  $\Y_{\uepsilon}$, having the property
 that the difference
$A_{k,N}(T_{\uepsilon},f_{\uepsilon})(x)-A_{k,N}(T_{\uepsilon},\tilde{f}_{\uepsilon})(x)$, where
$\tilde{f}_{\uepsilon}=\E(f_{\uepsilon}|\Y_{\uepsilon})$, converges pointwise  to $0$.
Our main goal is to make a suitable choice so that the corresponding factor systems have very special algebraic structure. This is done by controlling the averages $A_{k,N}(T_{\uepsilon},f_{\uepsilon})$ by certain seminorms (their precise definiton is given in Section~\ref{SS:Seminorms}), thus obtaining the following result:

\begin{theorem}\label{T:2'}
Let $k\in \N$, $(X,\X,\mu)$ be a probability space,  and for ${\uepsilon}\in V^*_k$ let $T_{\uepsilon}\colon X\to X$ be  measure preserving transformations,
and $f_{\uepsilon} \in L^\infty(\mu)$ be functions. Furthermore, suppose that  $\nnorm{f_{\uepsilon}}_{k,T_{\uepsilon}}=0$ for some ${\uepsilon}\in V^*_k$.
Then  the cubic averages of dimension $k$,  given by \eqref{E:A_k}, converge pointwise to $0$ as $N\to \infty$.
\end{theorem}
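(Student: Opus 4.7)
The plan is to prove Theorem~\ref{T:2'} via an iterated pointwise van der Corput argument that gradually ``differences'' the function $f_{\uepsilon^*}$ until its structure is directly bounded by the seminorm $\nnorm{f_{\uepsilon^*}}_{k,T_{\uepsilon^*}}$. The base case $k=1$ is immediate, since $A_{1,N}(T_1,f_1)(x)=\frac{1}{N}\sum_{n=1}^{N}f_1(T_1^n x)$ and the hypothesis $\nnorm{f_1}_{1,T_1}=0$ should be equivalent to $\mathbb{E}(f_1\mid\mathcal{I}(T_1))=0$, whence Birkhoff's theorem delivers pointwise convergence to $0$.

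\textbf{One van der Corput step.} After a harmless relabeling of the coordinates of $V_k$, we may assume $\epsilon^*_k=1$. Write
\[
A_{k,N}(T_\uepsilon,f_\uepsilon)(x) \;=\; \frac{1}{N^{k-1}}\sum_{\n'\in[1,N]^{k-1}} G_{\n'}(x)\cdot F_{\n',N}(x),
\]
where $G_{\n'}(x)=\prod_{\uepsilon\neq\mathbf{0},\,\epsilon_k=0}f_\uepsilon(T_\uepsilon^{\uepsilon'\cdot\n'}x)$ collects the $n_k$-free factors and $F_{\n',N}(x)=\frac{1}{N}\sum_{n_k=1}^{N}\prod_{\epsilon_k=1}f_\uepsilon(T_\uepsilon^{\uepsilon'\cdot\n'+n_k}x)$ is the inner one-dimensional average. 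Applying Cauchy--Schwarz in $\n'$ and then the classical pointwise van der Corput inequality to $F_{\n',N}$ yields, for every $H\in\mathbb{N}$,
\[
|A_{k,N}(x)|^2 \;\lesssim\; \frac{1}{H} + \frac{1}{H}\sum_{h=1}^{H} B_{N}^{(h)}(x) + o_N(1),
\]
in which each factor $f_\uepsilon$ with $\epsilon_k=1$ has been replaced inside $B_N^{(h)}$ by the shifted product $f_\uepsilon\cdot T_\uepsilon^h\overline{f_\uepsilon}$, while the $\epsilon_k=0$ factors contribute a bounded weight $|G_{\n'}|^2$. Iterating this step in the remaining directions $n_{k-1},\ldots,n_1$, one produces after $k$ rounds a bound
\[
|A_{k,N}(x)|^{2^k}\;\lesssim\; o_{H,N}(1)+\frac{1}{H_1\cdots H_k}\sum_{\mathbf h\in[H_1]\times\cdots\times[H_k]} D_N^{(\mathbf h)}(x),
\]
where the distinguished factor at vertex $\uepsilon^*$ has been fully differenced into $\prod_{\omega\in\{0,1\}^k} C^{|\omega|} T_{\uepsilon^*}^{\omega\cdot\mathbf h} f_{\uepsilon^*}$. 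Passing to the limit $N\to\infty$ (using the pointwise ergodic theorem for the single transformation $T_{\uepsilon^*}$) and then $H_j\to\infty$, the right-hand side is controlled by the Host--Kra--type quantity $\nnorm{f_{\uepsilon^*}}_{k,T_{\uepsilon^*}}^{2^k}=0$, and we conclude $A_{k,N}(T_\uepsilon,f_\uepsilon)(x)\to 0$ pointwise.

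\textbf{Main obstacle.} Two technical points are delicate. First, each van der Corput step must preserve pointwise control (not merely $L^2$); this is standard for bounded sequences but requires tracking the $o(1)$ errors uniformly as the iteration proceeds and as one averages over the auxiliary variables. Second, and more subtly, the iteration also produces ``doubled'' functions at vertices $\uepsilon\neq\uepsilon^*$, and one must verify that these doublings still fit into a cubic-type product structure at each stage so that the terminal expression isolates $f_{\uepsilon^*}$ in the form demanded by the seminorm $\nnorm{\cdot}_{k,T_{\uepsilon^*}}$. The non-commutativity of the family $(T_\uepsilon)$ is harmless at the vertex $\uepsilon^*$ itself, since only $T_{\uepsilon^*}$ appears in the fully differenced function, but it complicates the bookkeeping at the other vertices; this is where the main technical work of the argument lies. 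Finally, one needs the case $\uepsilon^*$ arbitrary in $V_k^*$, handled by choosing at each van der Corput step a coordinate $j$ with $\epsilon^*_j=1$ (which exists because $\uepsilon^*\neq\mathbf{0}$) before iterating on coordinates where $\epsilon^*_j=0$.
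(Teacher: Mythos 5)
Your plan has a genuine gap at its central step: the claim that $k$ rounds of van der Corput, one in each direction $n_1,\ldots,n_k$, leave the distinguished factor ``fully differenced'' into $\prod_{\omega\in\{0,1\}^k}\mathcal{C}^{|\omega|}T_{\uepsilon^*}^{\omega\cdot{\bf h}}f_{\uepsilon^*}$. The factor $f_{\uepsilon^*}(T_{\uepsilon^*}^{\uepsilon^*\cdot\n}x)$ depends only on those coordinates $n_j$ with $\epsilon^*_j=1$, so a van der Corput step in a direction with $\epsilon^*_j=0$ (or the Cauchy--Schwarz preceding it) either discards this factor or leaves it untouched. Your iteration therefore differences $f_{\uepsilon^*}$ at most $|\uepsilon^*|$ times, and your closing remark does not repair this: there are only $|\uepsilon^*|$ coordinates with $\epsilon^*_j=1$ to choose from. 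The best bound the scheme can reach is of the order $\nnorm{f_{\uepsilon^*}}_{|\uepsilon^*|+1,T_{\uepsilon^*}}$, and for $|\uepsilon^*|\le k-2$ such a bound is false, not merely unproved. For example, for $k=3$ take every $T_{\uepsilon}$ equal to the skew product $T(x,y)=(x+\alpha,y+x)$ on $\T^2$ and $f_{\uepsilon}(x,y)=e^{2\pi i(-1)^{|{\uepsilon}|}y}$: the second coordinate of $T^n(x,y)$ is a quadratic polynomial in $n$, and the alternating sum of a quadratic over the combinatorial cube $\{{\uepsilon}\cdot\n:{\uepsilon}\in V_3\}$ vanishes, so $\prod_{{\uepsilon}\in V_3^*}f_{\uepsilon}(T^{{\uepsilon}\cdot\n}(x,y))\equiv e^{-2\pi iy}$ and the averages do not tend to $0$, even though $\nnorm{f_{(1,0,0)}}_{2,T}=0$. (This is consistent with the theorem because $\nnorm{e^{2\pi iy}}_{3,T}>0$.) Note that the representative case in the paper's own proof is exactly a weight-one vertex, $\uepsilon^*=(1,0,\ldots,0)$.

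The missing ingredient is the Gowers--Cauchy--Schwarz inequality (Lemma~\ref{L:GCS}). The extra differencings of a low-weight vertex cannot come from van der Corput in the cube directions; they must come from repeated Cauchy--Schwarz in the averaging variables themselves, including the variable hidden inside the square and the newly created difference parameters, and this is precisely what Gowers--Cauchy--Schwarz packages: it assigns to \emph{every} factor, regardless of its vertex, the full Gowers norm $\nnorm{\cdot}_{U_k(\Z_N)}$. The paper's route is: one Cauchy--Schwarz to discard the factors not involving $n_1$ and reach the form \eqref{E:dfg}; a truncation so that Lemma~\ref{L:GCS} can be applied on $\Z_N$ despite wrap-around; Proposition~\ref{P:U_k}, an iterated van der Corput comparison of $\limsup_N\nnorm{\cdot}_{U_k(\Z_N)}$ with the infinitary quantity $\nnorm{\cdot}_{U_k(\N)}$ --- this is the only place where an iteration resembling yours appears, and there it is applied to a single sequence rather than to the entangled product; and finally Proposition~\ref{P:RelToErgodic}, where the ergodic theorem identifies $\nnorm{f(T^nx)}_{U_k(\N)}$ with $\nnorm{f}_{k,\mu_x}$. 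This last point also fixes a secondary defect of your plan: the pointwise ergodic theorem for $T_{\uepsilon^*}$ cannot be applied while the differenced product is still entangled with the other, non-commuting transformations; the separation has to be carried out combinatorially first.
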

In fact we give explicit bounds relating the pointwise limiting behavior of
the cubic averages \eqref{E:A_k} and the seminorms  $\nnorm{f_{\uepsilon}}_{k,T_{\uepsilon}}$ (see Corollary~\ref{C:key'}).

Using different terminology, Theorem~\ref{T:2'} states that  the factors $\cZ_{k-1,T_{\uepsilon}}$,
defined in Section~\ref{SS:Z_k}, are characteristic factors  for pointwise convergence of the
averages \eqref{E:A_k}.

To prove  Theorem~\ref{T:2'} we simplify and extend  to our particular context an argument given by Assani in \cite{As}.
To prove Theorem~\ref{T:2} we combine Theorem~\ref{T:2'} with the decomposition result of Proposition~\ref{P:ApprNil}
 (which was proved in \cite{CFH09} using the structure theorem of \cite{HKa}). We eventually reduce matters
 to a known convergence property of nilsequences (all notions are defined in  Section~\ref{S:Background}).

\subsection{Polynomial averages}
We are going to  generalize some convergence results of B.~Host and B.~Kra \cite{HK05b} and A.~Leibman \cite{Lei05c} that involve multiple ergodic averages of a single transformation to the case that involves
several not necessarily commuting transformations.


\begin{theorem}\label{T:PoliesConv1}
Let $\ell\in \N$, and  $(X,\X,\mu)$ be a probability space. For   $i=1,\ldots,\ell$ let
 $T_i\colon X\to X$ be measure preserving transformations,  $f_i \in L^\infty(\mu)$ be functions,  $p_i\in \Z[t]$ be non-constant polynomials such  that $p_i-p_j$ is non-constant
 for $i\neq j$, and
  $b\colon \N\to \N$ be a sequence  such that $b(N)\to \infty$ and  $b(N)/N^{1/d}\to 0$ as $N\to\infty$,
 where $d$ is the maximum degree of the polynomials $p_i$.\footnote{The second condition  guarantees
 that the contribution  of several boundary terms is negligible. For instance, for every bounded sequence $(a(n))_{n\in\N}$ and polynomial $p\in \Z[t]$ with degree at most $d$,   the difference of the averages
 $\E_{1\leq n\leq b(N),p(n)\leq m\leq N+p(n)} a(n)$ and  $\E_{1\leq n\leq b(N), 1\leq m\leq N} a(n)$ goes to $0$ as $N\to\infty$.}
Then  the averages
\begin{equation}\label{E:Polynomial}
\frac{1}{Nb(N)}\sum_{1\leq m\leq N, 1\leq n\leq b(N)} f_1(T_1^{m+p_1(n)}x)\cdot \ldots\cdot f_\ell(T_\ell^{m+p_\ell(n)}x)
\end{equation}
converge pointwise as $N\to \infty$.
\end{theorem}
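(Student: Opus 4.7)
The plan is to parallel the proof of Theorem~\ref{T:2}, adapting each of its three ingredients---pointwise seminorm control, approximation by nilsequences, and equidistribution on nilmanifolds---to handle polynomial iterates. A preliminary change of variables streamlines the structure: since $b(N)^d/N \to 0$, substitutions of the form $m \mapsto m - p(n)$ with $\deg p \leq d$ introduce boundary errors that are negligible pointwise (as in the footnote). Substituting $m \mapsto m - p_1(n)$ therefore reduces the problem to the case $p_1 = 0$, so that the averages take the form
$$B_N(x) = \frac{1}{N}\sum_{m=1}^N f_1(T_1^m x) \cdot \frac{1}{b(N)}\sum_{n=1}^{b(N)} \prod_{i=2}^\ell f_i(T_i^{m + q_i(n)} x),$$
where $q_i := p_i - p_1$ ($i \geq 2$) are non-constant and their pairwise differences $q_i - q_j = p_i - p_j$ remain non-constant for $i \neq j$. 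The outer $m$-average is now a Birkhoff-type average for $T_1$, while the inner $n$-average is polynomial.

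The main technical step is to establish a pointwise bound of the form $\limsup_{N \to \infty} |B_N(x)| \leq C \, \nnorm{f_j}_{s, T_j}^{\alpha}$ for some index $j$ and positive constants $C, \alpha, s$ depending on $d$ and $\ell$---the polynomial analogue of Corollary~\ref{C:key'}. I would obtain this by combining Cauchy--Schwarz on the outer Birkhoff average with iterated applications of Van der Corput's inequality on the inner polynomial average, in the spirit of Bergelson's PET induction: each step decreases the polynomial degree by one while doubling the number of factors, and after $d - 1$ iterations the inner expressions become linear in $n$, yielding cubic-type averages whose pointwise seminorm control is supplied by Theorem~\ref{T:2'}. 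The non-constancy of the differences $q_i - q_j$ guarantees that the polynomial structure does not degenerate during the iteration, while the scale separation $b(N)^d = o(N)$ absorbs the boundary errors introduced at each substitution.

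Granted the seminorm bound, Proposition~\ref{P:ApprNil} provides a decomposition $f_i = f_i^{(1)} + f_i^{(2)}$ for each $i$, with $f_i^{(1)}$ lifted from a nilfactor of $(X, T_i)$ and $\nnorm{f_i^{(2)}}_{s, T_i}$ arbitrarily small. Expanding $B_N$ multilinearly and applying the bound, every cross-term involving some $f_i^{(2)}$ is negligible, and we reduce to the case where each $f_i$ factors through a nilfactor. The averages then take the shape $\frac{1}{Nb(N)}\sum_{m,n} \prod_i \phi_i(a_i^{m + p_i(n)} \Gamma_i)$ on a product of nilmanifolds $G_i/\Gamma_i$, and pointwise convergence follows from Leibman's equidistribution theorem applied to the polynomial orbit $(m, n) \mapsto (a_i^{m+p_i(n)})_i$. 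The principal obstacle lies in the seminorm control: in the non-commuting setting, the Van der Corput / PET iteration must be executed with care, because each change of variables acts differently on the different transformations, and matching the resulting multilinear expressions to the cubic averages handled by Theorem~\ref{T:2'} requires tracking how both the polynomial degrees and the seminorm indices evolve through the iteration.
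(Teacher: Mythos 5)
Your overall architecture (seminorm control, decomposition via Proposition~\ref{P:ApprNil}, convergence of nilsequences via Theorem~\ref{T:ConvNil}) matches the paper's, but the central technical step is not actually established by your proposed route. You claim that after the PET/van der Corput iteration ``the inner expressions become linear in $n$, yielding cubic-type averages whose pointwise seminorm control is supplied by Theorem~\ref{T:2'}.'' This is where the argument breaks. The expressions that survive the iteration have the form
\begin{equation*}
\E_{m\in[1,N]}\Big|\E_{n\in[1,b(N)]}\prod_{i=1}^{\ell'} a_i(m+k_i n)\Big|^2
\end{equation*}
with \emph{distinct integer coefficients} $k_i$ and with the two averaging parameters running over ranges of very different lengths ($N$ versus $b(N)$ with $b(N)/N\to 0$). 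These are arithmetic-progression--type configurations, not parallelepiped configurations $m+\uepsilon\cdot\n$ with $0/1$ coefficients and equal ranges, so Theorem~\ref{T:2'} (equivalently Corollary~\ref{C:key'}) says nothing about them; already for $\ell'=1$ the quantity $\E_m|\E_{n\le b(N)}a(m+n)|^2$ is not a cubic average. The paper instead proves the linear base case by a separate induction on the number of sequences, with Lemma~\ref{L:U_2} as the engine: one more application of van der Corput, the change of variables $m\to m-n$ (legitimate only because $b(N)/N\to 0$), and the recursive identity \eqref{E:inductive} for the seminorms $\nnorm{\cdot}_{U_k(\N)}$. The whole estimate (Propositions~\ref{P:polies} and \ref{P:polies'}) is carried out at the level of general bounded sequences and only transferred to the ergodic seminorms $\nnorm{\cdot}_{k,\mu_x,T_i}$ at the end via Proposition~\ref{P:RelToErgodic}; also note that PET decreases the \emph{type} of the polynomial family lexicographically rather than dropping every degree by one per step, so the termination count is not simply $d-1$. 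Without a correct base case your induction does not close, and this is a genuine gap.

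A secondary inaccuracy: Proposition~\ref{P:ApprNil} gives a \emph{three}-term decomposition $f=f^s+f^u+f^e$, where $f^u$ has vanishing seminorm but $f^e$ is only small in $L^1(\mu)$. Collapsing this into two pieces with ``$\nnorm{f_i^{(2)}}_{s,T_i}$ arbitrarily small'' hides the real issue: the seminorm bound controlling the averages is in terms of the seminorms of the \emph{ergodic components} $\mu_{x,T_i}$, and $\norm{f^e}_{L^1(\mu)}\le\varepsilon$ does not make $\int|f^e|\,d\mu_{x,T_i}$ small for almost every $x$ at a fixed $\varepsilon$. The paper disposes of the error term by a direct pointwise $L^1$ estimate (change of variables plus the ergodic theorem, giving $\E_{m,n}|f^e|(T^{m+p(n)}x)\to\int|f^e|\,d\mu_x$) together with a subsequence argument in $\varepsilon$. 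You would need to incorporate one of these devices for the error term as well.
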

Using this  result for $\ell+1$ in place of $\ell$, letting  $T_0=\cdots=T_{\ell}=T$,  $p_{0}=0$, and   integrating with respect to $\mu$,   we deduce
that  the averages
 \begin{equation}\label{E:rtyu}
\frac{1}{N}\sum_{n=1}^N f_1(T^{p_1(n)}x)\cdot \ldots\cdot f_\ell(T^{p_\ell(n)}x)
\end{equation}
converge weakly in $L^2(\mu)$ as $N\to\infty$. This   recovers
 one of the main results from \cite{HK05b}. Let us remark though that  we  were not able to deduce from  Theorem~\ref{T:PoliesConv1}  anything useful regarding
 the well known  open problem of convergence  (weakly, in the mean, or pointwise)  of the averages
$\frac{1}{N}\sum_{n=1}^N f_1(T_1^{p_1(n)}x)\cdot \ldots\cdot f_\ell(T_\ell^{p_\ell(n)}x)$
for general commuting measure preserving transformations $T_1,\ldots,T_\ell$.

 A key ingredient in the proof of Theorem~\ref{T:PoliesConv1}  is  the following result;
 it  plays the same role Theorem~\ref{T:2'} plays in the proof of Theorem~\ref{T:2}.
\begin{theorem}\label{T:PoliesChar}
Under the  assumptions of Theorem~\ref{T:PoliesConv1},  there exists $k\in \N$, depending only on $\ell$ and the maximum degree of the polynomials $p_1,\ldots, p_\ell$, such that:
If $\nnorm{f_i}_{k,T_i}=0$ for some $i\in \{1,\ldots, \ell\}$, then the averages
\begin{equation}\label{E:Polies}
\frac{1}{N}\sum_{m=1}^N\Big|\frac{1}{b(N)}\sum_{n=1}^{b(N)}    f_1(T_1^{m+p_1(n)}x)\cdot \ldots\cdot f_\ell(T_\ell^{m+p_\ell(n)}x)\Big|^2
\end{equation}
converge pointwise to $0$ as $N\to \infty$.
\end{theorem}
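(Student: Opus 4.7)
The strategy is to iterate van der Corput's inequality on the $n$-variable following the Bergelson polynomial exhaustion (PET) scheme, reducing the double average in \eqref{E:Polies} to a cubic-type expression controlled by Theorem~\ref{T:2'}. First, expanding the square in \eqref{E:Polies} and setting $n_1=n$, $n_2=n+h$, one obtains
\begin{equation*}
\frac{1}{b(N)}\sum_{h}\,\frac{1}{Nb(N)}\sum_{m,n}\prod_{i=1}^{\ell} f_i(T_i^{m+p_i(n+h)}x)\,\overline{f_i(T_i^{m+p_i(n)}x)}\;+\;o(1),
\end{equation*}
where the $o(1)$ absorbs boundary terms from the short $n$-interval, negligible thanks to $b(N)/N^{1/d}\to 0$ and $b(N)\to\infty$.

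Next, I would apply van der Corput repeatedly on the $n$-variable, introducing shift parameters $h_2,h_3,\ldots,h_s$, each time choosing the pair of polynomial arguments to difference following the PET prescription so that the weight of the resulting polynomial family strictly decreases. Since the number of polynomials and their degrees are bounded in terms of $\ell$ and $d$, the procedure terminates after $s=s(\ell,d)$ steps, at which stage every surviving polynomial exponent either no longer depends on $n$ or is linear in $n$ with coefficients expressed through the shift parameters. The essential feature preserved throughout is that the function $f_i$ and its conjugates always appear composed with iterates of the single transformation $T_i$, and never get mixed with $T_j$ for $j\ne i$. After a substitution $m\mapsto m+L(n,h_1,\ldots,h_s)$ with $L$ linear to cancel the residual linear dependence on $n$, and after pulling the $f_j$-factors for $j\ne i$ outside using the triangle inequality in the shift variables (they contribute only bounded multiplicative weights), the remaining $m$-average involving $f_i$ takes the form of a cubic average \eqref{E:A_k} in dimension $k=k(\ell,d)$ in which every transformation equals $T_i$. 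By the pointwise bound in Corollary~\ref{C:key'} (the quantitative form of Theorem~\ref{T:2'}), this cubic average is dominated pointwise by a constant multiple of $\nnorm{f_i}_{k,T_i}$, which vanishes by assumption.

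The principal difficulty lies in the bookkeeping of the PET reduction: one must keep a precise account of how the polynomial family evolves under each van der Corput differencing, verify that the chosen weight functional strictly decreases at each step, and control the errors arising because the $n$-variable ranges over the short interval $[1,b(N)]$ while $m$ ranges over $[1,N]$ (so shifts by $h_j\in[-b(N),b(N)]$ in $n$ remain harmless, but combined shifts must still be absorbed into $m$ without creating large errors). A secondary subtlety, genuinely novel compared with the classical PET arguments over a single transformation, is that non-commutativity of the $T_j$'s forbids absorbing $T_j$-shifts into the $f_i$-slot for $j\ne i$; consequently the reduction has to be carried out asymmetrically, treating $f_i$ as the distinguished factor whose $T_i$-structure is tracked at every step, and all other $f_j$'s as inert bounded weights.
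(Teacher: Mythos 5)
Your overall strategy --- van der Corput differencing in the $n$-variable organized by PET, keeping each $f_j$ attached to its own $T_j$ throughout, and closing the argument with a seminorm bound for the distinguished function --- is exactly the route the paper takes (Proposition~\ref{P:polies} and Proposition~\ref{P:polies'}), so in outline you have the right proof. But there is one genuine gap and one misidentified endpoint. The gap: you never explain why the distinguished function $f_i$ survives the entire PET reduction with a nontrivial polynomial family attached to it. A naive run of PET controls the average only by the function whose polynomial plays the role of the ``leading'' element of a nice family, i.e.\ one of maximal degree whose differences with the others stay non-constant after every $\vdc$ step; if $p_i$ has non-maximal degree (say $p_i(n)=n$ alongside $p_1(n)=n^2$), the differencing can render $p_i$'s arguments constant in $n$ and your $f_i$ degenerates into an inert weight before you can extract $\nnorm{f_i}_{k,T_i}$. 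The paper repairs this by a separate reduction (the deduction of Proposition~\ref{P:polies} from Proposition~\ref{P:polies'}): introduce the dual sequence $a_{0,N}(m)=\E_{n\in[1,b(N)]}\prod_j \bar a_j(m+p_j(n))$, shift $m\mapsto m+p(n)$ for a $p$ chosen so that $p+p_i$ has strictly maximal degree in $\{p,p+p_1,\dots,p+p_\ell\}$, and apply Cauchy--Schwarz; only then is the family ``nice'' with $f_i$ in the protected leading slot, and Lemma~\ref{L:IndVDC} guarantees niceness is preserved at every $\vdc$ step. Without some version of this symmetrization your argument only proves the theorem for the $i$ attached to a polynomial of maximal degree.

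The secondary issue: the terminal object of the reduction is not the joint cubic average of Corollary~\ref{C:key'} (where the shift parameters and $m$ range over the same interval $[1,N]$), but an iterated expression $\limsup_{R_1\to\infty}\E_{h_1\in[1,R_1]}\cdots$ of $m$-averages of products of shifts of $f_i\circ T_i^{(\cdot)}$ --- that is, the quantity $\nnorm{f_i(T_i^nx)}_{U_k(\N)}$ of \eqref{E:sdc}. The correct closing step is therefore Proposition~\ref{P:RelToErgodic} (giving $\nnorm{f_i(T_i^nx)}_{U_k(\N)}=\nnorm{f_i}_{k,\mu_{x,T_i}}$ almost everywhere, which vanishes when $\nnorm{f_i}_{k,T_i}=0$), not Corollary~\ref{C:key'}. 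This is a citable fix rather than a conceptual error, but as written the lemma you invoke does not apply to the expression you produce. You should also make explicit, when you absorb the combined shifts into $m$, that the substitution $m\mapsto m-p(n)$ only costs $o(N)$ boundary terms because $p(b(N))/N\to 0$; you gesture at this but it is the place where the hypothesis $b(N)/N^{1/d}\to 0$ is actually consumed.
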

It follows at once that  the factors $\cZ_{k-1,T_{\uepsilon}}$,
defined in Section~\ref{S:Background}, are characteristic factors for pointwise convergence of the averages \eqref{E:Polynomial} and \eqref{E:Polies}.

Using Theorem~\ref{T:PoliesChar}  for  $T_1=\cdots=T_{\ell}=T$, and integrating with respect to $\mu$, we deduce that there exists $k\in\N$ such that  if $\nnorm{f_i}_{k,T}=0$ for some $i\in \{1,\ldots, \ell\}$, then the averages
\eqref{E:rtyu} converge to $0$  in $L^2(\mu)$ as $N\to\infty$. This   recovers one of the main  results from  \cite{Lei05c} needed to prove  convergence in $L^2(\mu)$ for the averages \eqref{E:rtyu}.
\subsection{Open problems related to multiple recurrence}
We state some multiple recurrence problems that are naturally related to the previously established
convergence results. Historically, recurrence problems  have turned out to be easier to establish than
the corresponding convergence problems, but this  does not seem to be the case in our current setup.
\begin{problem}\label{Pr:1}
Let $k\in \N$, $(X,\X,\mu)$ be a probability space,  and for ${\uepsilon}\in V_k$ let $T_{\uepsilon}\colon X\to X$ be measure preserving transformations.
Is it true that   for every $A\in \X$ with $\mu(A)>0$ there exists $\n\in \N^k$
such that
$$
\mu\big(\bigcap_{{\uepsilon}\in V_k} T_{\uepsilon}^{-{\uepsilon}\cdot \n}A\big)>0 \ ?
$$
\end{problem}
We believe that the answer is positive. When all the transformations commute  this is indeed the case. Furthermore, the answer is positive when all the transformations are weak mixing since in this case the corresponding averages converge to $(\mu(A))^{2^k}$ (see \cite{As}, or use  Theorem~\ref{T:2'} in the current article).
In general, even the case $k=2$ is open.
Namely, it is not known whether if $T,S,R$ are measure preserving transformations acting on
the same probability space $(X, \X,\mu)$, and $A\in\X$ satisfies $\mu(A)>0$, then there exist $m,n\in \N$ such that
\begin{equation}\label{E:k=2}
\mu(A\cap T^{-m}A\cap S^{-n}A\cap R^{-m-n}A)>0.
 \end{equation}
 This problem  was first studied by Assani in \cite{As09}. We remark that using Theorem~\ref{T:2'}, one can reduce matters to verifying this multiple recurrence property for very special systems (namely, systems  with  ergodic components  rotations on compact abelian groups), but   we were not able to handle this seemingly simple case.
The non-ergodicity of the transformations causes serious problems and  another obstacle (that becomes more serious in dimension higher than $2$) is that it is not clear why   various approximations arguments that one would like to use preserve the recurrence property \eqref{E:k=2}.
 Interestingly, we were able to overcome the analogous problems for questions pertaining to convergence. Let us also remark that in general no power of $\mu(A)$
 can be used as a lower bound for the multiple intersections in \eqref{E:k=2}. To see this let $S=T^{-2}$,  $R=T^2$  and factor out the transformation $T^{-2n}$; then  the left hand side in
 \eqref{E:k=2} becomes greater than $\mu(A\cap T^{-(m+2n)}A\cap T^{-2(m+2n)}A)$,
and it is known  that in general no  power of $\mu(A)$
 can be used as a lower bound for these expressions (see Theorem~2.1 in \cite{BHK}).
\begin{problem}\label{Pr:2}
Let $\ell\in\N$, $(X,\X,\mu)$ be a probability space, and $T_1,\ldots,T_{\ell}$ be measure preserving transformations acting on $X$.
Furthermore, let $p_1,\ldots,p_\ell$ be distinct polynomials with integer coefficients that satisfy $p_i(0)=0$ for $i=1,\ldots,\ell$.
Is it true that   for every $A\in \X$ with $\mu(A)>0$ there exist $m,n\in \N$ such that
$$
\mu(A\cap T_1^{-m-p_1(n)}A\cap \cdots \cap T_\ell^{-m-p_\ell(n)}A )>0\ ?
$$
\end{problem}
Again, we believe that the answer is positive. Notice that the case where $T_1=\cdots=T_\ell$  corresponds to the so called ``polynomial Szemer\'edi Theorem''
 proved by Bergelson and Leibman~\cite{BL}.
  When all transformations are weak mixing the answer is positive since in this case the corresponding averages converge to $(\mu(A))^{\ell+1}$ (this follows from Theorem~\ref{T:PoliesChar}).
  In general, even the case where all the polynomials are linear is open.
Lastly, let us note that the assumption that the polynomials are distinct is necessary.
It is known (see for example  \cite{BL04}), that there
exist  (non-commuting) transformations $T,S$, acting on the same probability space $(X,\X,\mu)$,  and a set $A\in \X$ with $\mu(A)>0$ and
 such that
$\mu(T^nA\cap S^nA)=0$ for every  $n\in\N$.

\subsection{General conventions and notation}

The following notation will be used throughout the article:  $\N\mathrel{\mathop:}=\{1,2,\ldots\}$, $Tf\mathrel{\mathop:}=f\circ T$,
  $\Re(z)$ is the real part of a complex number $z$. We write $a\colon \Z_N\to \C$ when
  $a\colon \N\to \C$ is a periodic sequence with period $N$.
  We use boldface symbols for vectors.
  If $F$ is a finite set and
$a\colon F\to \C$, then
$\E_{n\in F}a(n)\mathrel{\mathop:}= \frac{1}{|F|}\sum_{n\in F} a(n)$.
 For $r\in \N$,  we denote by
 $S_ra$  the sequence defined by $(S_ra)(n)\mathrel{\mathop:}=a(n+r)$.
We use the symbol $\ll$  when
 some expression is  majorized by a constant multiple of some  other expression. If this constant
  depends on some
 variables $k_1,\ldots, k_\ell$ we write $\ll_{k_1,\ldots, k_\ell}$.

\section{Background Material}\label{S:Background}
We gather  some basic background material that we use throughout this article.
\subsection{Basic facts from ergodic theory}
\subsubsection*{Systems.}
A \emph{system} is a quadruple $(X,\X,\mu,T)$ where
$(X,\X,\mu)$ is a Lebesgue probability space and $T\colon X\to X$ is
an invertible measure preserving transformation.
\subsubsection*{Factors.}
For the context of this article, a \emph{factor} of a system
$(X,\X,\mu, T)$, is a system $(X,\Y,\mu, T)$ where $\Y$ is a
$T$-invariant sub-$\sigma$-algebra of $\X$. We often abuse terminology and
refer to  $\Y$ in place of  the quadruple $(X,\Y,\mu, T)$.


\subsubsection*{Isomorphic systems.} Two systems $(X,\X, \mu, T)$ and $(Y, \Y, \nu, S)$
are {\it isomorphic} if there exists a bijective measurable map $\pi\colon X'\to Y'$,
where $X'$ is a $T$-invariant subset of $X$ and $Y'$ is an
$S$-invariant subset of $Y$, both of full measure, such that
$\mu\circ\pi^{-1} = \nu$ and $(S\circ\pi)(x) = (\pi\circ T)(x)$ for every
 $x\in X'$.



\subsubsection*{Ergodicity and ergodic decomposition.}
We define   $\cI\mathrel{\mathop:}=\{A\in \X\colon \mu(T^{-1}A\triangle A)=0\}$.
A system is \emph{ergodic} if $\cI$ consists only of sets with  measure
  $0$ or $1$. Given an ergodic system and $f\in L^1(\mu)$, the \emph{ergodic theorem}
states that for $\mu$ almost every $x\in X$ we have
$$
\lim_{N\to\infty}\frac{1}{N}\sum_{n=1}^N f(T^nx)=\int f\ d\mu.
$$

Let $x\mapsto\mu_x$ be a regular version of the
conditional measures with respect to the $\sigma$-algebra $\cI$.
This means that the map $x\mapsto\mu_x$ is
$\CI$-measurable, and
for every bounded measurable function $f$ we have
$$
 \E_\mu(f| \cI)(x)=\int f\,d\mu_x\ \text{ for $\mu$ almost every }\ x\in X.
$$
Then the \emph{ergodic decomposition} of $\mu$ is
\begin{equation}\label{E:ErDec}
\mu\mathrel{\mathop:}=\int\mu_x\,d\mu(x).
\end{equation}
The measures $\mu_x$ have the additional property that for $\mu$ almost every $x\in X$ the system $(X,\X,\mu_x,T)$ is ergodic.

\subsection{The seminorms $\nnorm{\cdot}_k$}\label{SS:Seminorms}
The seminorms $\nnorm{\cdot}_k$ were defined for ergodic systems in \cite{HKa}.
These definitions can be easily extended to non-ergodic systems.

Given a system $(X,\X,\mu,T)$ with ergodic decomposition as in  \eqref{E:ErDec}  and a function $f\in L^\infty(\mu)$, we define inductively
\begin{gather}
\label{E:first}
 \nnorm{f}_{1}\mathrel{\mathop:}=\norm{\int f \ d\mu_x}_{L^2(\mu)}\ ;\\
\label{E:recur}
\nnorm f_{k+1}^{2^{k+1}} \mathrel{\mathop:}=\lim_{N\to\infty}\frac{1}{N}\sum_{n=1}^{N}
\nnorm{\bar{f}\cdot T^nf}_{k}^{2^{k}}.
\end{gather}
It can be shown that for every $k\in \N$  the  limit above exists, and $\nnorm{\cdot}_k$, thus defined, is a seminorm on $L^\infty(\mu)$ (see \cite{HKa}, \cite{CFH09}).
If further clarification is needed, we  write  $\nnorm{\cdot}_{k,\mu}$, or $\nnorm{\cdot}_{k,T}$.

More explicitly,  when $k\geq 2$,
one has
\begin{equation}\label{E:semi1}
\nnorm{f}_{k}^{2^k}=
\lim_{N\to\infty} \E_{n_1\in [1,N]} \cdots \lim_{N\to\infty} \E_{n_{k-1}\in [1,N]}  \int  \Big|\int
\prod_{{\uepsilon} \in V_{k-1}}  \mathcal{C}^{|{\uepsilon}|}T^{{\uepsilon}\cdot \n}f
\ d\mu_x\Big|^2d\mu
\end{equation}
where $\n=(n_1,\ldots,n_{k-1})$.
It follows that if $\nnorm{f}_{L^\infty(\mu)}\leq 1$, then
$\nnorm{f}_k\leq \norm{f}_{L^1(\mu)}$ for every $k\in\N$.

For every  function $f\in L^\infty(\mu)$ we have
$$
\nnorm{f}_{k,\mu}^{2^k}=\int \nnorm{f}_{k,\mu_x}^{2^k} \ d\mu(x).
$$
It follows that if $\nnorm{f}_{k,\mu}=0$, then $\nnorm{f}_{k,\mu_x}$ for $\mu$ almost every $x\in X$.

\subsection{Nilsystems and nilsequences}
A \emph{nilmanifold}  is a homogeneous space $X=G/\Gamma$ where  $G$ is a nilpotent Lie group,
and $\Gamma$ is a discrete cocompact subgroup of $G$. If $G_{k+1}=\{e\}$ , where $G_k$ denotes
the $k$-the commutator subgroup of $G$, we say that $X$ is a
 $k$-\emph{step nilmanifold}.

  A $k$-step nilpotent Lie group $G$ acts on $G/\gG$ by left
translation, where the translation by a fixed element $a\in G$ is given
by $T_{a}(g\gG) = (ag) \gG$.  By $m_X$ we denote the unique Borel probability
measure on $X$ that is invariant under the action of $G$ by left
translations (called the {\it normalized Haar measure}), and by $\G/\gG$ we denote the
completion of the Borel $\sigma$-algebra of $G/\gG$. Fixing an element $a\in G$, we call
the system $(G/\gG, \G/\gG, m_X, T_{a})$ a {\it $k$-step  nilsystem}.

If $X=G/\Gamma$ is a $k$-step nilmanifold,  $a\in G$,  $x\in X$, and
$f\in C(X)$,
 we call the sequence $(f(a^nx))_{n\in\N}$ a \emph{basic $k$-step nilsequence}.
A \emph{$k$-step nilsequence}, is a uniform limit of \emph{basic $k$-step nilsequences}.

We are going to use the following result of A.~Leibman (see Theorem~A in \cite{Lei05b}):
\begin{theorem}[{\bf\cite{Lei05b}}] \label{T:ConvNil}
Let $X=G/\Gamma$ be a nilmanifold, $a_1,\ldots,a_\ell\in  G$, $f_1\ldots, f_\ell\in C(X)$, and $p_1,\ldots,p_\ell\colon \Z^d\to \Z$
be polynomials. Then for  every F{\o}lner sequence
  $(\Phi_N)_{N\in\N}$ in $\Z^d$ and $x_1,\ldots,x_\ell \in X$ the averages
  $$
  \frac{1}{|\Phi_N|}\sum_{{\bf n}\in \Phi_N}f_1(a_1^{p_1({\bf n})}x_1) \cdots  f_\ell(a_\ell^{p_\ell({\bf n})}x_\ell)
$$
converge as $N\to \infty$.
\end{theorem}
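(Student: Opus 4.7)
The plan is to reduce the theorem to an equidistribution statement for a single polynomial orbit on an auxiliary nilmanifold, and then prove equidistribution by induction on the nilpotency step via the van der Corput trick. First I would form the product nilmanifold $\tilde X := X^\ell = \tilde G/\tilde\Gamma$ with $\tilde G := G^\ell$ and $\tilde\Gamma := \Gamma^\ell$, which has the same step as $X$. The function $F := f_1 \otimes \cdots \otimes f_\ell$ is continuous on $\tilde X$, the point $\tilde x := (x_1,\ldots,x_\ell)$ lies in $\tilde X$, and the map
\[
g\colon \Z^d \to \tilde G, \qquad g(\n) := \bigl(a_1^{p_1(\n)}, \ldots, a_\ell^{p_\ell(\n)}\bigr)
\]
is polynomial in Mal'cev coordinates (each factor is a one-parameter subgroup composed with an integer polynomial, and polynomiality is stable under direct products). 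The averages in the statement coincide with $|\Phi_N|^{-1} \sum_{\n \in \Phi_N} F(g(\n)\tilde x)$, so it suffices to show that, for any polynomial $g\colon \Z^d \to \tilde G$, any $F \in C(\tilde X)$, any $\tilde x \in \tilde X$, and any F{\o}lner sequence $(\Phi_N)$ in $\Z^d$, these averages converge.

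The structural claim I would aim at is that the orbit closure $Y := \overline{\{g(\n)\tilde x : \n \in \Z^d\}}$ is a finite union of sub-nilmanifolds $Y_1,\ldots,Y_s \subset \tilde X$, and $\Z^d$ partitions into finitely many cosets $\n_j + H$ of a finite-index sublattice $H$ on each of which $g(\n)\tilde x$ is uniformly distributed on a single $Y_j$ with respect to its Haar measure $m_{Y_j}$. Granting this, the F{\o}lner property gives $|\Phi_N \cap (\n_j + H)|/|\Phi_N| \to 1/[\Z^d : H]$ (a standard consequence of asymptotic translation-invariance together with $\sum_j \mathbf{1}_{\n_j + H} \equiv 1$), and uniform distribution on each coset forces the conditional average of $F$ to tend to $\int_{Y_j} F\, dm_{Y_j}$; summing gives the theorem. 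I would establish uniform distribution by induction on the nilpotency step $k$ of $\tilde G$. The base case $k = 1$ reduces to Weyl's equidistribution theorem for polynomial sequences on tori along an arbitrary F{\o}lner sequence. For the inductive step, let $\tilde G_k$ be the last nontrivial commutator subgroup and expand $F$ as a Fourier series along the vertical torus $\tilde G_k/(\tilde G_k \cap \tilde \Gamma)$. The zeroth Fourier coefficient descends to a continuous function on the step $k-1$ nilmanifold $\tilde X/\tilde G_k$, where the inductive hypothesis applies. Each nontrivial Fourier coefficient is controlled via the van der Corput inequality: its squared average is bounded by averages of $F(g(\n)\tilde x)\overline{F(g(\n+\mathbf h)\tilde x)}$, which after the character computation reduces to averages along the derivative polynomial $\tilde g_{\mathbf h}(\n) := g(\n)^{-1} g(\n + \mathbf h)$, itself a polynomial sequence of strictly lower weight in an appropriate Leibman filtration; iterating collapses the problem to the abelian base case.

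The main obstacle is installing a Leibman-style polynomial filtration on $\tilde G$ so that (i) the derivative sequences $\tilde g_{\mathbf h}$ remain polynomial and strictly decrease in weight when projected to $\tilde G_k$, and (ii) equidistribution of $g(\n)\tilde x$ on $Y$ can be detected through a quantifiable nonresonance condition against vertical Fourier characters. This filtration bookkeeping is delicate in the non-abelian setting and constitutes the technical core of \cite{Lei05b}. Once in place, the induction closes cleanly, and the passage from Ces\`aro averages on $[1,N]^d$ to arbitrary F{\o}lner averages introduces no extra difficulty: for polynomial orbits on nilmanifolds both the coset densities and the limiting orbital integrals are independent of the F{\o}lner sequence chosen.
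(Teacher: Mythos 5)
This statement is not proved in the paper at all: it is quoted verbatim as Theorem~A of Leibman \cite{Lei05b} and used as a black box, so there is no internal proof to compare against. Your sketch is, in outline, the standard (and essentially the only known) route to Leibman's theorem, and it matches what \cite{Lei05b} actually does: pass to the product nilmanifold $X^\ell$ so that the $\ell$-fold average becomes a single polynomial orbit $g(\n)\tilde x$; invoke the structure theorem that the orbit closure is a finite union of sub-nilmanifolds over which the orbit is well-distributed after restricting to cosets of a finite-index subgroup of $\Z^d$; and prove that structure theorem by induction on the nilpotency step, splitting a test function into vertical Fourier components and killing the nontrivial ones by van der Corput, which lowers the ``weight'' of the polynomial sequence in a suitable filtration.

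As a self-contained proof, however, your proposal has two load-bearing gaps, both of which you candidly flag. First, the orbit-closure decomposition (finite union of sub-nilmanifolds, with identification of the correct closed subgroup and the finite-index lattice $H$) is itself a substantial theorem, not something that falls out of the induction you describe; you assert it as ``the structural claim I would aim at'' without a mechanism for producing $Y$ and $H$. Second, the claim that the derivative sequences $\tilde g_{\mathbf h}(\n)=g(\n)^{-1}g(\n+\mathbf h)$ strictly decrease in weight requires setting up Leibman's polynomial filtration and verifying closure of polynomial sequences under products and derivatives in a nilpotent group --- this is exactly the PET-style bookkeeping you defer to the reference, and it is where all the work lies. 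One point you assert that does deserve a sentence of justification is the F{\o}lner robustness: it is correct, but only because the induction yields \emph{well}-distribution rather than mere uniform distribution --- the base case is the observation that any limit point $L$ of $\E_{\n\in\Phi_N}e(\alpha\cdot\n)$ satisfies $L=e(\alpha\cdot\mathbf h)L$ for all $\mathbf h$ by asymptotic invariance, forcing $L=0$, and the van der Corput step preserves this F{\o}lner-uniformity. In short: your roadmap is faithful to Leibman's argument, but like the paper itself you are ultimately relying on \cite{Lei05b} for the substance.
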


\subsection{The factors $\CZ_k$ and their structure} \label{SS:Z_k}
Given a system $(X,\cX,\mu,T)$, it was shown in \cite{HKa} (for ergodic systems but the same construction works for general systems) that
for every $k\geq 1$, there exists a $T$-invariant sub-$\sigma$-algebra $\cZ_{k-1}$ of $\X$
that satisfies
\begin{equation}
\label{E:DefZ_l}
\text{\em for } \ f\in L^\infty(\mu),\ \ \E(f|\cZ_{k-1})=0\quad \text{ \em if and
  only if } \quad   \nnorm f_{k,T} = 0.
\end{equation}
The connection between the factors $\cZ_k$ of a given  system and
nilsystems is given by the following structure theorem of Host and Kra:
\begin{theorem}[{\bf\cite{HKa}}]\label{T:Structure}
  Let $k\in \N$ and $(X,\X,\mu,T)$ be a system  with ergodic decomposition as in \eqref{E:ErDec}.
  Then  for $\mu$ almost every $x\in X$ the  system  $(X,\mathcal{Z}_{k},\mu_x,T)$ is an inverse limit of  $k$-step
  nilsystems.
\end{theorem}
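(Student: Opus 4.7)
The plan is to reduce to the ergodic Host--Kra structure theorem (which is the content of \cite{HKa} for ergodic systems) and combine it with the ergodic decomposition. First, I would invoke the ergodic case: for $\mu$-a.e.\ $x$, the system $(X,\X,\mu_x,T)$ is ergodic, so the Host--Kra ergodic structure theorem yields a $T$-invariant sub-$\sigma$-algebra $\CZ_k^{\mu_x}$ of $\X$ such that $(X,\CZ_k^{\mu_x},\mu_x,T)$ is an inverse limit of $k$-step nilsystems, and $\E_{\mu_x}(f|\CZ_k^{\mu_x})=0$ if and only if $\nnorm{f}_{k+1,\mu_x}=0$ for every $f\in L^\infty(\mu_x)$.

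Next I would assemble the fiberwise factors into a single $\sigma$-algebra $\widetilde{\CZ}_k\subseteq \X$ whose intersection with each ergodic fiber (modulo $\mu_x$-null sets) agrees with $\CZ_k^{\mu_x}$; since the Lebesgue assumption on $(X,\X,\mu)$ makes the ergodic decomposition a disintegration over $\CI$, this construction can be carried out in a measurable way and one gets the identity
$$\E_\mu(f|\widetilde{\CZ}_k)(x)=\E_{\mu_x}(f|\CZ_k^{\mu_x})(x)\quad\text{for $\mu$-a.e.\ }x\in X.$$
Then I would show $\widetilde{\CZ}_k=\CZ_k$ using the defining property \eqref{E:DefZ_l} together with the already noted identity
$$\nnorm{f}_{k+1,\mu}^{2^{k+1}}=\int \nnorm{f}_{k+1,\mu_x}^{2^{k+1}}\,d\mu(x).$$
Indeed, $\E_\mu(f|\widetilde{\CZ}_k)=0$ is equivalent to $\E_{\mu_x}(f|\CZ_k^{\mu_x})=0$ for $\mu$-a.e.\ $x$, and by the ergodic characterization this is equivalent to $\nnorm{f}_{k+1,\mu_x}=0$ for $\mu$-a.e.\ $x$, which by the displayed identity is equivalent to $\nnorm{f}_{k+1,\mu}=0$, i.e.\ $\E_\mu(f|\CZ_k)=0$. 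Both factors thus have the same orthogonal complement in $L^2(\mu)$, so they coincide modulo $\mu$-null sets.

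With this identification in hand the theorem follows immediately: for $\mu$-a.e.\ $x$, the system $(X,\CZ_k,\mu_x,T)$ equals $(X,\CZ_k^{\mu_x},\mu_x,T)$, which is an inverse limit of $k$-step nilsystems by the ergodic Host--Kra theorem. The main technical hurdle is not conceptual but measure-theoretic: one has to verify that the family $\{\CZ_k^{\mu_x}\}$ can be glued into a genuine sub-$\sigma$-algebra of $\X$ for which fiberwise conditional expectation coincides with $\E_\mu(\,\cdot\,|\widetilde{\CZ}_k)$. This relies on a measurable selection of the projection operators along the ergodic decomposition, which is standard in the Lebesgue setting adopted here but requires some care.
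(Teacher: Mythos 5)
You should first note that the paper does not prove this statement: it is quoted from \cite{HKa}, and the only proof-relevant remark in the text is that the construction of $\CZ_k$, carried out in \cite{HKa} for ergodic systems, ``works for general systems.'' Your reduction --- apply the ergodic Host--Kra structure theorem on each ergodic fiber $(X,\X,\mu_x,T)$, then identify the global factor $\CZ_k$ with the assembled fiberwise factors via the characterization \eqref{E:DefZ_l} and the identity $\nnorm{f}_{k+1,\mu}^{2^{k+1}}=\int\nnorm{f}_{k+1,\mu_x}^{2^{k+1}}\,d\mu(x)$ --- is exactly the standard derivation, and the seminorm bookkeeping in your third step is correct, including the index shift $\CZ_k\leftrightarrow\nnorm{\cdot}_{k+1}$.

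The one place where your write-up is not yet a proof is the step you yourself flag: assembling $\{\CZ_k^{\mu_x}\}_{x}$ into a sub-$\sigma$-algebra $\widetilde{\CZ}_k$ of $\X$ satisfying the fiberwise conditional-expectation identity. As stated this does not follow from ``standard measurable selection'': the ergodic structure theorem, used as a black box, produces for each $x$ a $\sigma$-algebra defined only modulo $\mu_x$-null sets, with no a priori measurability of $x\mapsto\E_{\mu_x}(f|\CZ_k^{\mu_x})$. The way this is handled in \cite{HKa} (and relied on in \cite{CFH09}) is to avoid the gluing altogether: $\CZ_k$ is constructed globally for a not necessarily ergodic system via the cube measures $\mu^{[k+1]}$, whose definition requires no ergodicity and which disintegrate as $\int\mu_x^{[k+1]}\,d\mu(x)$; the identification of the trace of $\CZ_k$ on $\mu_x$ with $\CZ_k^{\mu_x}$ for $\mu$-a.e.\ $x$ then falls out of this disintegration, and measurability in $x$ is automatic because the fiberwise projections are given by explicit limits of ergodic averages. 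If you replace your gluing step by this observation --- or, equivalently, run your $L^2$ argument over a countable dense family of $f\in L^\infty(\mu)$ and use that sub-$\sigma$-algebras of a Lebesgue space are countably generated modulo null sets, so that equality mod $\mu$-null sets passes to equality mod $\mu_x$-null sets for a.e.\ $x$ --- the argument is complete.
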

The conclusion in the preceding statement means that for $\mu$ almost every $x\in X$
 for a given measure $\mu_x$ there exists  an increasing  sequence of $T$-invariant sub-$\sigma$-algebras
$(\X_j)_{j\in\mathbb{N}}$ (depending on $\mu_x$),   such that
$\bigvee_{j\in\N}\X_j=\X$ up to sets of $\mu_x$-measure
zero, and each system $(X,\X_j,\mu_x,T)$ is isomorphic to a $k$-step
  nilsystem.

We remark that although we do not make explicit use of Theorem~\ref{T:Structure} in this article,
it is  a key ingredient
in the proof of Proposition~\ref{P:ApprNil} that is crucial for our analysis.

\section{Characteristic factors and convergence for cubic averages}
\subsection{Characteristic factors for cubic averages} We are going to prove Theorem~\ref{T:2'}.
 The main idea is best  illustrated by considering the
case of cubic averages of dimension $2$.
Assuming for example that  $f_1\in L^\infty(\mu)$ satisfies $\nnorm{f_1}_{2,\mu,T_1}=0$, and $f_2,f_3\in L^\infty(\mu)$, our goal is to establish the pointwise identity
$$
\lim_{N\to\infty}|\E_{ m,n\in  [1,N]} f_1(T_1^mx)\cdot f_2(T_2^nx)\cdot f_3(T_3^{m+n}x)|=0.
$$
It suffices to show that for $\mu$ almost every $x\in X$ we have
\begin{equation}\label{E:a1}
\lim_{N\to \infty}\E_{ n\in [1,N]}|\E_{m\in [1,N]} f_1(T_1^mx)\cdot f_3(T_3^{m+n}x)|^2=0.
\end{equation}
Using suitable applications of a variation of van der Corput's fundamental lemma (the precise statement is given in Lemma~\ref{L:VDC})
we can show (see Proposition~\ref{P:key}) that  the limit in \eqref{E:a1} is bounded by a constant
 multiple of
\begin{equation}\label{E:a2}
\lim_{N\to\infty} \E_{n\in [1,N]} |\lim_{N\to\infty} \E_{m\in [1,N]}  \bar{f}_1(T_1^mx)\cdot f_1(T_1^{m+n}x) |^2.
\end{equation}
The ergodic theorem implies that for $\mu$ almost every $x\in X$ the last limit is
equal to
$$
\lim_{N\to\infty} \E_{n\in [1,N]}  \Big|\int \bar{f}_1(x)\cdot f_1(T_1^nx) \ d\mu_{x,T_1} \Big|^2=\nnorm{f_1}_{2,\mu_x,T_1}^4
$$
where $\mu=\int
\mu_{x,T_1} \ d\mu(x)$ is the ergodic decomposition of the measure $\mu$ with respect to $T_1$.
 Since $\nnorm{f_1}_{2,\mu,T_1}=0$ implies that  $\nnorm{f_1}_{2,\mu_x,T_1}=0$ for $\mu$ almost every $x\in X$,
 our goal is established.

Since most of the calculations and estimates do not depend on the dynamical structure
of the sequences $(f_{\uepsilon}(T_{\uepsilon}^nx))_{n\in \N}$ (it is only at the very last step of
the argument that we use the pointwise ergodic theorem to take advantage of this extra structure)
we work them out for general bounded sequences $(a_{\uepsilon}(n))_{n\in\N}$.

Key to our study will be some quantities  that control the limiting behavior of the cubic averages \eqref{E:A_k} when the
sequences $(f_{\uepsilon}(T_{\uepsilon}^nx))_{n\in\N}$ are replaced by general bounded sequences $(a_{\uepsilon}(n))_{n\in\N}$.
Closely related quantities have been defined by T.~Gowers in \cite{Gow01} and by B.~Host and B.~Kra in \cite{HK09}.
We define these and prove some basic estimates in the next subsections.
\subsubsection{Measures of uniformity}
We remind the reader that when we write $b\colon \Z_N\to \C$ we refer to a periodic sequence
  $b\colon \N\to \C$ with period $N$.
We say that $a=(a_N)_{N\in \N}$, where $a_N\colon \Z_N\to \C$, is \emph{uniformly bounded}, if
 there exists a constant $C\in \R$ such that $|a_N(n)|\leq C$ for every $n\in [1,N]$ and $N\in\N$.
For $k\in\N$,  $z\in \C$, and ${\uepsilon}\in V_{k}$,  we let
$|{\uepsilon}|\mathrel{\mathop:}=\epsilon_1+\cdots+\epsilon_k$, and  $ \mathcal{C}^kz\mathrel{\mathop:}=z$ if $k$ is even, and
 $ \mathcal{C}^k z\mathrel{\mathop:}=\bar{z}$ if $k$ is odd.

We let
$$
\nnorm{a}_{U_{1}(\N)}\mathrel{\mathop:}=\limsup_{N\to\infty}|\E_{n\in [1,N]}a_N(n)|,
$$
and for $k\geq 2$  we define
\begin{multline}\label{E:sdc}
\nnorm{a}_{U_k(\N)}\mathrel{\mathop:}=\\
\Big(\limsup_{N\to\infty}\E_{ n_{1}\in [1,N]}\cdots \limsup_{N\to\infty}\E_{n_{k-1}\in [1,N]} \limsup_{N\to\infty}\big|\E_{m\in  [1,N]}
 \prod_{{\uepsilon} \in V_{k-1}}  \mathcal{C}^{|{\uepsilon}|} a_N(m+{\uepsilon} \cdot \n)\big|^2\Big)^{\frac{1}{2^k}}
\end{multline}
where $\n=(n_1,\ldots,n_{k-1})$.

Furthermore, for  $N\in \N$ we let
$$
\nnorm{a_N}_{U_{1}(\Z_N)}\mathrel{\mathop:}=|\E_{ n\in [1,N]}a_N(n)|,
$$
and for $k\geq 2$ we define
$$
\nnorm{a_N}_{U_k(\Z_N)}\mathrel{\mathop:}=
\big(\E_{\n\in [1,N]^{k-1}} \big|\E_{m\in [1,N]} \prod_{{\uepsilon} \in V_{k-1}}  \mathcal{C}^{|{\uepsilon}|} a_N(m+{\uepsilon}\cdot  \n)\big|^2\big)^{\frac{1}{2^k}}.
$$
This is the so called  Gowers norm of $a_N$.

Given a bounded sequence $a\colon \N\to \C$ ,  for $N\in \N$, we define  $a_N\colon \Z_N\to \C$  by $a_{N}(n+N\Z)\mathrel{\mathop:}=a(n)$ for $ n\in [1, N]$. We  let $\tilde{a}\mathrel{\mathop:}=(a_N)_{N\in\N}$.
Furthermore, we define
$$
\nnorm{a}_{U_k(\N)}\mathrel{\mathop:}=\nnorm{\tilde{a}}_{U_k(\N)},\quad   \nnorm{a}_{U_k(\Z_N)}\mathrel{\mathop:}=\nnorm{a_N}_{U_k(\Z_N)}.
$$
Notice that $\nnorm{a}_{U_k(\N)}$ can also be computed by replacing $a_N$ with $a$ in \eqref{E:sdc}.

One immediately sees that
 $\nnorm{\cdot}_{U_k(\N)}$    satisfies  the recursive identity
\begin{equation}\label{E:inductive}
\limsup_{N\to\infty}\E_{r\in [1,N]}\nnorm{S_ra \cdot\bar{a}}_{U_k(\N)}^{2^{k}}=\nnorm{a}_{U_{k+1}(\N)}^{2^{k+1}}.
\end{equation}
We caution the reader that the triangle inequality does not necessarily hold for
$\nnorm{\cdot}_{U_k(\N)}$,  but this is not going to play any role in this article.

The next result   links the seminorms $\nnorm{\cdot}_{U_k(\N)}$
with the ergodic seminorms $\nnorm{\cdot}_k$ that were defined in Section~\ref{SS:Seminorms} (a similar result
was also established in Corollary~3.11 of \cite{HK09}).
\begin{proposition}\label{P:RelToErgodic}
Let $(X,\mathcal{X},\mu,T)$ be a measure preserving system
with ergodic decomposition $\mu=\int \mu_x \ d\mu(x)$ and $f\in L^\infty(\mu)$. Then
  for $\mu$ almost every  $x\in X$  we have
$$
\nnorm{f(T^nx)}_{U_k(\N)}= \nnorm{f}_{k,\mu_x}.
$$
\end{proposition}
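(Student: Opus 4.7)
The plan is to proceed by induction on $k\geq 1$, using the pointwise ergodic theorem for the base case and the matching recursive identities \eqref{E:inductive} and \eqref{E:recur} for the inductive step. These two identities have the same formal shape (an average over a shift parameter $r$ of a power of the lower-order object), so once the base case is in hand the induction should go through cleanly.

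For $k=1$, I would apply the pointwise ergodic theorem to $(X,\X,\mu,T)$: for $\mu$-a.e.\ $x$, the $\limsup$ defining $\nnorm{f(T^n x)}_{U_1(\N)}$ is an honest limit equal to $|\int f\, d\mu_x|$. Since $\mu_x$ is ergodic for $\mu$-a.e.\ $x$, the conditional expectation onto the $T$-invariant $\sigma$-algebra of $(X,\X,\mu_x,T)$ is the constant function $\int f\, d\mu_x$, so by \eqref{E:first} we have $\nnorm{f}_{1,\mu_x} = |\int f\, d\mu_x|$, which matches.

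For the inductive step, assume the identity holds at level $k$ and apply \eqref{E:inductive} to $a(n):=f(T^n x)$. The key observation is that $S_r a(n)\cdot \bar a(n) = (\bar f\cdot T^r f)(T^n x)$, and $\bar f\cdot T^r f\in L^\infty(\mu)$, so the inductive hypothesis applied to this function gives, for each fixed $r\in\N$ and $\mu$-a.e.\ $x$,
$$\nnorm{S_r a\cdot \bar a}_{U_k(\N)} = \nnorm{\bar f\cdot T^r f}_{k,\mu_x}.$$
Taking the countable intersection over $r\in\N$ of the corresponding full-measure sets, I obtain a single full-measure set on which this equality holds uniformly in $r$. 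Substituting into \eqref{E:inductive} yields
$$\nnorm{f(T^n x)}_{U_{k+1}(\N)}^{2^{k+1}} = \limsup_{N\to\infty}\E_{r\in[1,N]}\nnorm{\bar f\cdot T^r f}_{k,\mu_x}^{2^k}.$$
By \eqref{E:recur} applied to the ergodic system $(X,\X,\mu_x,T)$, the right-hand side is a genuine limit equal to $\nnorm{f}_{k+1,\mu_x}^{2^{k+1}}$. This forces the $\limsup$ to be a $\lim$ and yields the desired identity at level $k+1$.

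I do not expect a serious obstacle. The only technicality is the passage from $\limsup$ to $\lim$ on the sequence side: this is pinched by the fact that the ergodic side is defined via an actual limit, so at each stage of the induction the $\limsup$ is forced to coincide with it. The only bookkeeping is ensuring that the exceptional null sets accumulate in a harmless way, which works because the shift parameter $r$ at each level ranges over the countable set $\N$, so the full-measure sets survive countable intersection.
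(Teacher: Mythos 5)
Your proof is correct, but it follows a genuinely different route from the paper's. The paper argues directly: it applies the pointwise ergodic theorem to each of the countably many functions $\prod_{{\uepsilon}\in V_{k-1}}\mathcal{C}^{|{\uepsilon}|}T^{{\uepsilon}\cdot\n}f$ (one for each $\n\in\N^{k-1}$), so that the innermost $\limsup$ over $m$ in the definition of $\nnorm{\cdot}_{U_k(\N)}$ collapses to $\bigl|\int\prod_{{\uepsilon}}\mathcal{C}^{|{\uepsilon}|}T^{{\uepsilon}\cdot\n}f\,d\mu_x\bigr|^2$, and then identifies the remaining iterated limsups with the iterated limits in the closed-form expression \eqref{E:semi1} specialized to the ergodic measure $\mu_x$. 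You instead induct on $k$, using only the base case \eqref{E:first} (via the ergodic theorem) and the matching recursions \eqref{E:inductive} and \eqref{E:recur}, never invoking \eqref{E:semi1}. Both arguments rest on the same two pillars --- the pointwise ergodic theorem plus a countable intersection of full-measure sets, and the fact that the ergodic-theoretic side is defined by genuine limits, which pins each $\limsup$ down to a $\lim$. The paper's version is a one-shot comparison once \eqref{E:semi1} is available; yours is slightly more self-contained in that it only uses the recursive definitions, at the price of tracking the exceptional null sets through the induction (which you handle correctly, since the shift parameter $r$ ranges over the countable set $\N$ at each stage). Your identification $S_ra\cdot\bar a(n)=(\bar f\cdot T^rf)(T^nx)$ and the treatment of the base case (using that $\mu_x$ is ergodic, so $\nnorm{f}_{1,\mu_x}=|\int f\,d\mu_x|$) are both accurate.
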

\begin{proof}
The ergodic theorem gives that for  $\mu$ almost every  $x\in X$, for every $\n\in  \N^{k-1}$ and ${\uepsilon}\in V_{k-1}$ we have
$$
\lim_{N\to\infty}\E_{ m\in [1, N]}
 \prod_{{\uepsilon} \in V_{k-1}} \mathcal{C}^{|{\uepsilon}|}f(T^{m+{\uepsilon}\cdot \n }x)=
\int
\prod_{{\uepsilon} \in V_{k-1}} \mathcal{C}^{|{\uepsilon}|}T^{{\uepsilon}\cdot \n}f
\ d\mu_{x}.
$$
The result now follows by using   the definition of   $\nnorm{\cdot}_{U_k(\N)}$ and  formula $\eqref{E:semi1}$.
\end{proof}

\subsubsection{Comparing $\nnorm{\cdot}_{U_k(\Z_\infty)}$ with  $\limsup_{N\to\infty}\nnorm{\cdot}_{U_k(\Z_N)}$. }
The following estimate will be key for our analysis:
\begin{proposition}\label{P:U_k}
Let $a=(a_N)_{N\in\N}$, where $a_N\colon \Z_N\to \C$,  be uniformly bounded by $1$.
Then for every $k\in\N$ we have
$$
\limsup_{N\to\infty}\nnorm{a}_{U_k(\Z_N)} \ll_k  \nnorm{a}_{U_k(\N)}.
$$
\end{proposition}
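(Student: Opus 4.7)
The plan is to prove Proposition~\ref{P:U_k} by induction on $k\in\N$, using the recursive identity \eqref{E:inductive} together with its natural cyclic analog. For the base case $k=1$, the definitions give $\nnorm{a}_{U_1(\Z_N)}=|\E_{n\in[1,N]} a_N(n)|$ and $\nnorm{a}_{U_1(\N)}=\limsup_{N\to\infty}|\E_{n\in[1,N]} a_N(n)|$, so the required inequality holds with equality (and constant $1$) after taking $\limsup_{N\to\infty}$.

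For the inductive step from $k$ to $k+1$, I would first establish the cyclic analog of \eqref{E:inductive}, namely
$$\nnorm{a_N}_{U_{k+1}(\Z_N)}^{2^{k+1}}=\E_{r\in[1,N]}\nnorm{(S_r a_N)\cdot\bar{a}_N}_{U_k(\Z_N)}^{2^k},$$
where $S_r$ acts as the cyclic shift on the $N$-periodic sequence $a_N$. This is obtained by exactly the same computation that yields \eqref{E:inductive}: open the innermost $|\cdot|^2$, change variables to peel off a single shift, and regroup the factors via $\mathcal{C}^{|\uepsilon'|}(z)\cdot \mathcal{C}^{|\uepsilon'|+1}(w)=\mathcal{C}^{|\uepsilon'|}(z\bar w)$. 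For each fixed $r\in\N$, the inductive hypothesis applied to the uniformly bounded double sequence $b^{(r)}:=((S_r a_M)\cdot\bar{a}_M)_{M\in\N}$ then yields $\limsup_{M\to\infty}\nnorm{b^{(r)}_M}_{U_k(\Z_M)}\ll_k \nnorm{(S_r a)\cdot\bar{a}}_{U_k(\N)}$.

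The main obstacle is to transfer this pointwise-in-$r$ bound to the averaged quantity $\E_{r\in[1,N]}$ under the outer $\limsup_{N\to\infty}$: since the rate of convergence in the inductive bound may depend on $r$, one cannot directly interchange $\limsup_N$ with $\E_{r\in[1,N]}$. My plan to handle this is a splitting argument: for each $\eta>0$ and threshold $R$, split the average at $r=R$; on $r\in[1,R]$ the inductive bound applies uniformly once $N$ is large enough (depending on $R$ and $\eta$), while on $r\in(R,N]$ the contribution is controlled using the trivial bound $\nnorm{\cdot}_{U_k(\Z_N)}\leq 1$ together with the fact that \eqref{E:inductive} expresses $\nnorm{a}_{U_{k+1}(\N)}^{2^{k+1}}$ as exactly the same type of $\limsup_N \E_{r\in[1,N]}$ average. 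Letting $R\to\infty$ more slowly than $N$, letting $\eta\to 0$, and raising the resulting estimate to the $1/2^{k+1}$-th power produces the required bound with a constant $C_{k+1}$ depending only on $k$. I expect this interchange step to be the most delicate part of the argument.
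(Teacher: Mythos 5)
Your base case and your cyclic recursion $\nnorm{a_N}_{U_{k+1}(\Z_N)}^{2^{k+1}}=\E_{r\in[1,N]}\nnorm{(S_r a_N)\cdot\bar{a}_N}_{U_k(\Z_N)}^{2^k}$ are both correct, and applying the inductive hypothesis for each fixed $r$ is fine. The gap is in the step you yourself flag as delicate, and the fix you propose cannot work. In the exact cyclic identity the differencing parameter $r$ ranges over all of $[1,N]$, so when you split at $r=R$ the head $r\in[1,R]$ contributes at most $R/N$, which tends to $0$: essentially \emph{all} of the mass of $\E_{r\in[1,N]}$ sits in the tail $r\in(R,N]$. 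Bounding that tail by the trivial estimate $\nnorm{\cdot}_{U_k(\Z_N)}\le 1$ therefore yields only $\limsup_N\nnorm{a_N}_{U_{k+1}(\Z_N)}^{2^{k+1}}\le 1$, which is vacuous --- and the conclusion requires the left-hand side to vanish whenever $\nnorm{a}_{U_{k+1}(\N)}=0$. The inductive hypothesis controls each fixed $r$, i.e.\ a bounded and hence asymptotically negligible portion of the average, and no choice of $R=R(N)$ escapes the dichotomy: either $R/N\to 0$ and the controlled part is negligible, or $R$ grows comparably to $N$ and you would need the inductive bound uniformly for $r$ up to order $N$, which you do not have (for $r$ comparable to $N$ the cyclically shifted object on $\Z_N$ has not yet ``converged'' to the limiting object on $\N$, and the wraparound is no longer negligible).

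The missing ingredient is an inequality rather than an identity: the paper replaces the exact recursion by the van der Corput estimate (Lemma~\ref{L:VDC}, packaged as Lemma~\ref{L:U_k}), which asserts that for every $R$,
$$
\E_{n\in [1,N]}\big|\E_{m\in [1,N]}a(m+n)\cdot \bar{a}(m)\big|^2\leq 2\cdot  \E_{r\in [1, R]}
\big|\E_{m\in [1,N]} a(m+r)\cdot \bar{a}(m)\big|^2 +1/R .
$$
This genuinely transfers the full-range average over the differencing parameter to an average over the \emph{bounded} range $[1,R]$, at the cost of a factor $2$ and an additive error $1/R$; it is here that positivity and Cauchy--Schwarz are used, and it is exactly what makes the fixed-$r$ inductive hypothesis applicable. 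With this substitute for your cyclic identity, the rest of your scheme (apply the inductive hypothesis for each $r\le R$, let $N\to\infty$, then $R\to\infty$, and invoke \eqref{E:inductive}) goes through and is precisely the paper's proof.
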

To prove Proposition~\ref{P:U_k} we are going to use the following variation of van der Corput's
fundamental lemma:
\begin{lemma}\label{L:VDC}
Let  $N\in\N$ and  $a\colon \Z_N\to \mathbb{C}$.
Then for every  $R\in \N$ we have
$$
|\E_{n\in [1,N]} a(n)|^2\leq 2\cdot \E_{ r\in [1, R]} \Big(1-\frac{r}{R}\Big)\Re\big(\E_{n\in [1,N]}a(n+r)\cdot\bar{a}(n))
+ \frac{\E_{n\in [1,N] } |a(n)|^2}{R}.
$$
\end{lemma}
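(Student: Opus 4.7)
The plan is to use the standard Cauchy--Schwarz duplication trick together with the periodicity of $a$. First I would observe that since $a\colon\Z_N\to\C$ is $N$-periodic, translating the averaging variable does not change the mean: for every $k\in\Z$,
$$\E_{n\in[1,N]} a(n+k) = \E_{n\in[1,N]} a(n),$$
which gives the identity
$$R\cdot \E_{n\in[1,N]} a(n) = \E_{n\in[1,N]} \sum_{k=0}^{R-1} a(n+k).$$
An application of Cauchy--Schwarz (in the form $|\E f|^2\le\E|f|^2$) then yields
$$R^2 \bigl|\E_{n\in[1,N]} a(n)\bigr|^2 \le \E_{n\in[1,N]} \Bigl|\sum_{k=0}^{R-1} a(n+k)\Bigr|^2.$$

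Next I would expand the square as a sum over pairs $(k,\ell)\in\{0,\ldots,R-1\}^2$ and reindex by the difference $r=k-\ell$. By the same periodicity argument, $\E_{n} a(n+k)\bar a(n+\ell) = \E_{n} a(n+r)\bar a(n)$, and the number of pairs $(k,\ell)$ with a given difference $r$ is $R-|r|$. The diagonal $r=0$ contributes $R\cdot\E_{n}|a(n)|^2$. For $r\neq 0$, I would pair the term at $r$ with its counterpart at $-r$; using that $\E_{n}a(n-r)\bar a(n)=\overline{\E_{n}a(n+r)\bar a(n)}$ (periodicity again, after shifting $n\mapsto n+r$), these combine to give $2\,\Re\E_{n} a(n+r)\bar a(n)$. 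Collecting everything produces
$$R^2\bigl|\E_{n\in[1,N]} a(n)\bigr|^2 \le R\cdot\E_{n\in[1,N]}|a(n)|^2 + 2\sum_{r=1}^{R-1}(R-r)\,\Re\E_{n\in[1,N]} a(n+r)\bar a(n).$$

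Finally I would divide through by $R^2$. Since the $r=R$ summand carries weight $1-R/R=0$, I may extend the summation range to $r\in[1,R]$ without penalty, rewriting the sum as $\E_{r\in[1,R]}(1-r/R)\,\Re\E_{n}a(n+r)\bar a(n)$. This produces the stated inequality exactly.

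There is no genuine obstacle here: the lemma is the textbook van der Corput identity specialized to cyclic groups, and the only point demanding care is the bookkeeping of the triangular weights $(R-r)$ and the separation of the diagonal $r=0$ contribution. The reason this periodic variant is preferable to the classical $[1,N]$-version is precisely that periodicity eliminates the usual boundary corrections of size $O(R/N)$; this cleanliness is what makes the lemma compatible with the comparison estimate of Proposition~\ref{P:U_k}, where the $\nnorm{\cdot}_{U_k(\Z_N)}$ seminorms are evaluated through iterated applications of this bound without any error accumulation.
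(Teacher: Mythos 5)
Your proof is correct and follows essentially the same route as the paper: average over $R$ shifts using periodicity, apply Cauchy--Schwarz, expand the resulting double average, separate the diagonal, and reindex by the difference $r$ to produce the triangular weights $1-r/R$. The bookkeeping (including the harmless extension of the sum to $r=R$, whose weight vanishes) checks out.
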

\begin{proof}
Let $R\in \N$.
Using the identity
$$
\E_{n\in [1,N]} a(n)= \E_{n\in [1,N]}\E_{ r\in [1, R]} a(n+r)
$$
and the Cauchy-Schwarz inequality, we get that $|\E_{n\in [1,N]} a(n)|^2$ is bounded by
$$
\E_{n\in [1,N]}|\E_{r\in [1, R]} a(n+r)|^2=\E_{ r,r' \in [1,R]} \E_{n\in [1,N]} a(n+r) \cdot \bar{a}(n+r').
$$
Isolating those terms for which $r=r'$, and using the symmetry up to conjugation
of the remaining expression with respect to $r$ and $r'$,
  we see that the last expression  is equal to
 $$
 \frac{2}{R^2}\cdot \sum_{1\leq r'<r\leq R} \Re\big(\E_{n\in [1,N]} a(n+r) \cdot \bar{a}(n+r')\big) + \frac{\E_{n\in [1,N]}|a(n)|^2}{R}.
 $$
 To end the proof, it suffices to perform the change of variables $n\to n-r'$
 and notice that  for $k\in \{1,\ldots, R\}$  the equation
$r-r'=k$ with $1\leq r'<r\leq R$ has $R-k$ solutions.
\end{proof}

\begin{lemma}\label{L:U_k}
Let  $N\in\N$ and  $a\colon \Z_N\to \mathbb{C}$ be  bounded by $1$.
Then for
every  $R\in \N$ we have
$$
\E_{n\in [1,N]}|\E_{m\in [1,N]}a(m+n)\cdot \bar{a}(m)|^2\leq 2\cdot  \E_{r\in [1, R]}
|\E_{m\in [1,N]} a(m+r)\cdot \bar{a}(m)|^2 +1/R.
$$
\end{lemma}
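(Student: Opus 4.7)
\smallskip

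\noindent\textbf{Proof plan.} The plan is to apply Lemma~\ref{L:VDC} (van der Corput) to the inner sum, treating it as an average in the variable $m$ with the parameter $n$ held fixed, and then to average the resulting inequality over $n$. The miracle is that after swapping the order of summation, the cross terms collapse to exactly $|c(r)|^2$, where $c(r) \mathrel{\mathop:}= \E_{m\in[1,N]} a(m+r)\bar{a}(m)$ is the very quantity appearing on the right-hand side.

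First I would fix $n\in [1,N]$ and define $f_n(m) \mathrel{\mathop:}= a(m+n)\bar{a}(m)$, so that the inner expression in the left-hand side is $|\E_{m\in[1,N]} f_n(m)|^2$. Applying Lemma~\ref{L:VDC} to the function $f_n$ with the given $R$, and using $|a|\le 1$ to bound $\E_m |f_n(m)|^2 \le 1$, one obtains
\[
|\E_m f_n(m)|^2 \;\le\; 2\,\E_{r\in [1,R]}\Bigl(1-\tfrac{r}{R}\Bigr)\Re\bigl(\E_m f_n(m+r)\bar{f_n}(m)\bigr) \;+\; \tfrac{1}{R}.
\]

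Next I would average this inequality over $n\in[1,N]$ and swap the order of summation between $n$ and $m$ inside the correlation term. Expanding $f_n(m+r)\bar{f_n}(m) = a(m+n+r)\bar{a}(m+n)\cdot \bar{a}(m+r)a(m)$ and swapping, the $n$-average isolates the factor
\[
\E_{n\in [1,N]} a(m+n+r)\bar{a}(m+n) \;=\; c(r),
\]
which is independent of $m$ by periodicity of $a$ modulo $N$ (shift $n\mapsto n-m$). What remains is $\E_m \bar{a}(m+r)a(m) = \overline{c(r)}$, so the cross term becomes $c(r)\overline{c(r)} = |c(r)|^2$, a nonnegative real quantity. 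Finally I would drop the factor $1-r/R \le 1$ to reach the stated inequality
\[
\E_{n\in[1,N]}|\E_{m\in[1,N]} a(m+n)\bar{a}(m)|^2 \;\le\; 2\,\E_{r\in [1,R]}|c(r)|^2 \;+\; \tfrac{1}{R}.
\]

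There is no real obstacle here, as this is a direct two-line manipulation once Lemma~\ref{L:VDC} is applied in the right variable. The only point requiring minor care is the Fubini-type interchange of the $n$ and $m$ averages, and the recognition that after the swap the inner $n$-average produces exactly $c(r)$ and hence lets the right-hand side simplify to $|c(r)|^2$ rather than some genuine bilinear object — this is what makes the lemma useful in the sequel.
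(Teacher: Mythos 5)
Your proof is correct and follows essentially the same route as the paper's: apply Lemma~\ref{L:VDC} to the inner $m$-average for each fixed $n$, average over $n$, interchange the $n$ and $m$ averages, and use the shift $n\mapsto n-m$ (valid by periodicity mod $N$) to factor the correlation term as $c(r)\overline{c(r)}=|c(r)|^2$ before dropping the weight $1-r/R\le 1$. No substantive differences.
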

\begin{proof}
Using  Lemma~\ref{L:VDC}
we deduce that  the left hand side is bounded by
$$
2\cdot \E_{n\in [1,N]}\E_{ r\in [1, R]} \Big(1-\frac{r}{R}\Big)\Re\big( \E_{m\in  [1,N]}a(m+n+r)\cdot \bar{a}(m+r)\cdot  \bar{a}(m+n)\cdot a(m)\big)
+1/R.
$$
Interchanging the averages and performing the change of variables $n\to n-m$ we deduce that the last expression is equal to
$$
2\cdot \E_{r\in [1, R]} \Big(1-\frac{r}{R}\Big)|\E_{m\in  [1,N]}a(m+r)\cdot   \bar{a}(m)  |^2
+1/R.
$$
The result follows.
\end{proof}
Next we  prove Proposition~\ref{P:U_k} by successively applying  Lemma~\ref{L:U_k}.
\begin{proof}[Proof of Proposition~\ref{P:U_k}]
Remember  that $a_N\colon \Z_N\to \C$  is defined by
$a_{N}(n+N\Z)\mathrel{\mathop:}=a(n)$ for $n\in [1,N] $.
For $k=1$ we have
$$
\limsup_{N\to\infty} \nnorm{a}_{U_1(\Z_N)} = \limsup_{N\to\infty} \nnorm{a_N}_{U_1(\Z_N)} =  \nnorm{a}_{U_1(\N)}.
$$

 Suppose that the statement holds for
$k\in \N$, we are going to show that it holds for  $k+1$.
We have
\begin{equation}\label{E:a1'}
\nnorm{a_N}^{2^{k+1}}_{U_{k+1}(\Z_N)}=\E_{n_1,\ldots,n_k\in  [1,N]}\big|\E_{m\in [1,N]}
\prod_{{\uepsilon} \in V_{k}}
 \mathcal{C}^{|{\uepsilon}|} a_N(m+\epsilon_1n_1+\cdots+\epsilon_{k}n_{k})\big|^2.
\end{equation}
 We fix $n_1,\ldots,n_{k-1}\in  [1,N]$, and apply Lemma~\ref{L:U_k} for
$A_{N,n_1,\ldots,n_{k-1}}\colon \Z_N\to \C$ defined by
$$A_{N,n_1,\ldots,n_{k-1}}(m)=\prod_{{\uepsilon} \in V_{k-1}}  \mathcal{C}^{|{\uepsilon}|}a_N(m+\epsilon_1n_1+\cdots+\epsilon_{k-1}n_{k-1}).$$
We deduce that for every
$R,N\in \N$, the right hand side of \eqref{E:a1'} is  bounded by $2$ times
\begin{equation*}
\E_{n_1,\ldots,n_{k-1}\in [1,N]}\E_{ n_k\in [1,  R]}\big|\E_{m\in [1,N]} \prod_{{\uepsilon} \in V_{k}} \mathcal{C}^{|{\uepsilon}|}a_N(m+\epsilon_1n_1+\cdots+\epsilon_{k}n_{k})\big|^2 +1/R.
\end{equation*}
Next, we fix $n_k \in \N$, and use the inductive hypothesis  for the sequence
$
S_{n_k}a\cdot \bar{a}$.
We get
\begin{multline*}
\limsup_{N\to \infty}\E_{n_1,\ldots,n_{k-1}\in [1,N]}\Big|\E_{m\in [1,N]}
\prod_{{\uepsilon} \in V_{k}}  \mathcal{C}^{|{\uepsilon}|}a_N(m+\epsilon_1n_1+\cdots+\epsilon_{k}n_{k})\Big|^2=\\
\limsup_{N\to\infty} \nnorm{S_{n_k}a\cdot \bar{a}}^{2^{k}}_{U_{k}(\Z_N)}\ll_k
\nnorm{S_{n_k} a\cdot \bar{a}}^{2^k}_{U_k(\N)}.
\end{multline*}
Combining the previous estimates
we get for every
positive integer $R$ that
$$
\limsup_{N\to\infty} \nnorm{a}^{2^{k+1}}_{U_{k+1}(\Z_N)}=\limsup_{N\to\infty} \nnorm{a_N}^{2^{k+1}}_{U_{k+1}(\Z_N)}
\ll_k
\E_{ n_k\in[1, R]}\nnorm{S_{n_k} a\cdot \bar{a}}_{U_k(\N)}^{2^k}+1/R.
$$
Finally, taking the $\limsup$ as  $R\to\infty$,  and using  the identity
\eqref{E:inductive}
we get the advertised estimate.
\end{proof}

\subsubsection{Proof of Theorem~\ref{T:2'}}
We first recall a known estimate (see Lemma~3.8 in \cite{Gow01}).
 \begin{lemma}[{\bf Gowers-Cauchy-Schwarz Inequality}]\label{L:GCS}
Let $k\geq 2$ be an integer, $N\in \N$, and for ${\uepsilon}\in V_{k-1}$ let $a_{\uepsilon}\colon \Z_N\to \C$.
Then
$$
\E_{\n\in [1,N]^{k-1}}\big|\E_{m\in [1,N]}
\prod_{{\uepsilon} \in V_{k-1}}  \mathcal{C}^{|{\uepsilon}|} a_{\uepsilon}(m+{\uepsilon}\cdot \n)
\big|^2
\leq
\prod_{{\uepsilon}\in V_{k-1}}\nnorm{a_{\uepsilon}}^2_{U_k(\Z_N)}.
$$
\end{lemma}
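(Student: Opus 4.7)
The plan is to reduce the inequality, by a single preliminary manipulation, to the standard Gowers--Cauchy--Schwarz inequality for a Gowers inner product over the $k$-dimensional cube $V_k$, which in turn is a direct consequence of $k$ successive applications of the ordinary Cauchy--Schwarz inequality (one per coordinate direction).

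First I would open the square on the left-hand side. Writing $|z|^2=z\bar z$, the conjugate factor supplies a dummy variable $m'$ in place of $m$, and on $\Z_N$ the substitution $n_k:=m'-m$ is a bijection for fixed $m$. This rewrites the left-hand side as
$$
\E_{m\in[1,N]}\,\E_{\n\in[1,N]^{k}}\prod_{\omega\in V_k}\mathcal{C}^{|\omega|}\tilde a_\omega(m+\omega\cdot\n),
$$
where $\n=(n_1,\ldots,n_k)$ and where $\tilde a_\omega:=a_{\uepsilon}$ whenever $\omega=(\uepsilon,\omega_k)\in V_k$ with $\uepsilon\in V_{k-1}$; a short parity check using $\overline{\mathcal{C}^j z}=\mathcal{C}^{j+1}z$ confirms that the conjugation pattern matches.

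Next I would bound this Gowers inner product by $\prod_{\omega\in V_k}\nnorm{\tilde a_\omega}_{U_k(\Z_N)}$. This is the classical Gowers--Cauchy--Schwarz inequality (Lemma~3.8 of \cite{Gow01}), proved by applying the ordinary Cauchy--Schwarz inequality in each of the variables $n_1,\ldots,n_k$ in turn; each application duplicates one variable and raises the exponent by a factor of two, and after $k$ steps one extracts the $2^k$-th root to obtain the advertised product of $U_k$-norms.

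Finally, since each function $a_\uepsilon$ with $\uepsilon\in V_{k-1}$ corresponds to exactly two members of $\{\tilde a_\omega\}_{\omega\in V_k}$---namely $\tilde a_{(\uepsilon,0)}$ and $\tilde a_{(\uepsilon,1)}$---the product $\prod_{\omega\in V_k}\nnorm{\tilde a_\omega}_{U_k(\Z_N)}$ collapses to $\prod_{\uepsilon\in V_{k-1}}\nnorm{a_\uepsilon}_{U_k(\Z_N)}^2$, yielding the claimed bound. I expect no serious obstacle; the only detail that must be handled carefully is the bookkeeping of the conjugation symbols $\mathcal{C}^{|\cdot|}$ through the initial expansion of the square, which is routine.
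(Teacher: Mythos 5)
Your argument is correct: the expansion of the square via the substitution $n_k:=m'-m$ on $\Z_N$ does convert the left-hand side into the $k$-dimensional Gowers inner product of the $2^k$ functions $\tilde a_{(\uepsilon,0)}=\tilde a_{(\uepsilon,1)}=a_\uepsilon$, the conjugation bookkeeping $\overline{\mathcal{C}^{|\uepsilon|}z}=\mathcal{C}^{|\uepsilon|+1}z$ works out, and the classical Gowers--Cauchy--Schwarz inequality then gives exactly the stated product of squared norms. The paper itself offers no proof of this lemma --- it simply recalls it as Lemma~3.8 of \cite{Gow01} --- and your write-up is precisely the standard argument underlying that citation, so there is nothing to flag.
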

Combining Lemma~\ref{L:GCS} and  Proposition~\ref{P:U_k} we are going to prove the following key estimate:
\begin{proposition}\label{P:key}
Let $k\geq 2$ be an integer  and for ${\uepsilon}\in V_{k-1}$ let  $a_{\uepsilon}\colon \N\to \C$
be sequences. Then
\begin{equation}\label{E:bas}
\limsup_{N\to\infty}\E_{\n\in [1,N]^{k-1}} \big|\E_{m\in [1,N]}
 \prod_{{\uepsilon} \in V_{k-1}}
  \mathcal{C}^{|{\uepsilon}|} a_{\uepsilon}(m+{\uepsilon}\cdot \n) \big|^2 \ll_k
 \prod_{{\uepsilon}\in V_{k-1}}\nnorm{a_{\uepsilon}}^2_{U_k(\N)}.
\end{equation}
\end{proposition}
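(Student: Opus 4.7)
The plan is to combine Lemma~\ref{L:GCS} with Proposition~\ref{P:U_k} by embedding everything in a cyclic group $\Z_M$ of size $M$ large enough that the shifts $m + {\uepsilon}\cdot\n$ do not wrap around. Concretely, for each $N$ I fix $M := 2kN$ and let $c_{{\uepsilon}, M}\colon \Z_M \to \C$ denote the $M$-periodic extension of $a_{\uepsilon}|_{[1, M]}$; since $m + {\uepsilon}\cdot\n \in [1, kN] \subseteq [1, M]$ for $m \in [1, N]$ and $\n \in [1, N]^{k-1}$, we may freely replace $a_{\uepsilon}(m + {\uepsilon}\cdot\n)$ by $c_{{\uepsilon}, M}(m + {\uepsilon}\cdot\n)$.

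To pass from $[1, N]$-averages to $[1, M]$-averages I rewrite $\E_{m \in [1, N]} = (M/N)\,\E_{m \in [1, M]}\,\mathbf{1}_{[1, N]}(m)$ and similarly for $\n$. The indicator on $\n$ is simply bounded by $1$ and dropped (the inner $|\cdot|^2$ is non-negative), while the indicator on $m$ is absorbed into the ${\uepsilon} = {\bf 0}$ factor (using that $\mathcal{C}^{|{\bf 0}|}$ is the identity and $m + {\bf 0}\cdot\n = m$): put $c'_{{\bf 0}, M} := \mathbf{1}_{[1, N]} \cdot c_{{\bf 0}, M}$ and $c'_{{\uepsilon}, M} := c_{{\uepsilon}, M}$ for ${\uepsilon} \neq {\bf 0}$. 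This yields
$$
\text{LHS of }\eqref{E:bas}\text{ at scale }N \;\le\; (2k)^{k+1}\, \E_{\n \in [1, M]^{k-1}} \Big|\E_{m \in [1, M]} \prod_{{\uepsilon} \in V_{k-1}} \mathcal{C}^{|{\uepsilon}|} c'_{{\uepsilon}, M}(m + {\uepsilon}\cdot\n)\Big|^2,
$$
which Lemma~\ref{L:GCS} further bounds by $(2k)^{k+1}\prod_{{\uepsilon}} \nnorm{c'_{{\uepsilon}, M}}_{U_k(\Z_M)}^2$.

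It remains to bound each $\limsup_N \nnorm{c'_{{\uepsilon}, M}}_{U_k(\Z_M)}$ by a $k$-dependent multiple of $\nnorm{a_{\uepsilon}}_{U_k(\N)}$. For ${\uepsilon} \neq {\bf 0}$ this follows directly from Proposition~\ref{P:U_k}, since $(c_{{\uepsilon}, M})_M$ is by construction the $M$-periodic extension of the fixed sequence $a_{\uepsilon}$ (noting $M \to \infty$ together with $N$). For the ${\uepsilon} = {\bf 0}$ factor I use that Proposition~\ref{P:U_k} is stated for \emph{arbitrary} uniformly bounded sequences of cyclic sequences, hence applies to $(c'_{{\bf 0}, M})_M$. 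A direct computation of the iterated-limsup expression in \eqref{E:sdc} for this family, using that $[1, N]$ has density $1/(2k)$ inside $[1, M]$ and that this density appears squared through the inner $|\cdot|^2$, gives
$$
\nnorm{(c'_{{\bf 0}, M})_M}_{U_k(\N)}^{2^k} = (2k)^{-2}\, \nnorm{a_{{\bf 0}}}_{U_k(\N)}^{2^k},
$$
and hence $\limsup_N \nnorm{c'_{{\bf 0}, M}}_{U_k(\Z_M)} \ll_k \nnorm{a_{{\bf 0}}}_{U_k(\N)}$. Combining these estimates and taking $\limsup_{N \to \infty}$ yields \eqref{E:bas}.

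The main obstacle is precisely the ${\uepsilon} = {\bf 0}$ step: because the truncated sequence $c'_{{\bf 0}, M}$ is not the periodic extension of a single sequence on $\N$, the single-sequence form of Proposition~\ref{P:U_k} does not apply directly. The resolution is to invoke Proposition~\ref{P:U_k} in its sequence-of-sequences form and to carefully track the density $1/(2k)$ through each layer of the iterated-limsup definition of $\nnorm{\cdot}_{U_k(\N)}$.
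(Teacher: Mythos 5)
Your proof is correct and is essentially the paper's own argument in a lightly reparametrized form: the paper truncates the $\uepsilon={\bf 0}$ sequence to $[1,[N/k]]$ and works inside $\Z_N$, whereas you truncate to $[1,N]$ and work inside $\Z_{2kN}$, but in both cases the key steps are identical — kill wrap-around by supporting the ${\bf 0}$-component on an interval of density $\asymp 1/k$, enlarge the averaging ranges at the cost of an $O_k(1)$ factor, apply the Gowers--Cauchy--Schwarz inequality (Lemma~\ref{L:GCS}), and then invoke Proposition~\ref{P:U_k} for the resulting families of cyclic sequences together with the computation of the $U_k(\N)$-norm of the truncated family. The paper performs the same "easy computation" for the truncated ${\bf 0}$-component that you spell out, so no new idea is needed there.
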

\begin{proof}
We fix $k\geq 2$, $N\in \N$, and for  ${\uepsilon}\in V_{k-1}$ we  define $a_{{\uepsilon},N}\colon \Z_{N}\to \C$ as follows:
$$
a_{\uepsilon,N}(n+N\Z)=\begin{cases}a_{{\bf 0}}(n) \cdot {\bf 1}_{[1,[N/k]]}(n)  & \quad \text{ for } \ {\uepsilon} = {\bf 0} \\
a_{\uepsilon}(n)   & \quad \text{ for } \ {\uepsilon} \neq {\bf 0}
\end{cases}$$
where ${\bf 0}=(0,0,\ldots, 0)$ and $n\in \{1,\ldots, N\}$.
Suppose that the element  $\n\in \N^{k-1}$ has all its coordinates in the interval $[1, [N/k]]$. Then
$$
\prod_{{\uepsilon} \in V_{k-1}}  \mathcal{C}^{|{\uepsilon}|} a_{{\uepsilon},N}(m+{\uepsilon}\cdot \n)
=\begin{cases}
\prod_{{\uepsilon} \in V_{k-1}}  \mathcal{C}^{|{\uepsilon}|} a_{\uepsilon}(m+{\uepsilon}\cdot \n) & \quad \text{for }\
m\in [1, [N/k]]\\
0 & \quad \text{for }\
m\in ([N/k],N].
\end{cases}
$$
It follows that
$$
\E_{\n\in [1, [N/k]]^{k-1}} \big|\E_{ m\in [1, [N/k]]}
\prod_{{\uepsilon} \in V_{k-1}}  \mathcal{C}^{|{\uepsilon}|}a_{\uepsilon}(m+{\uepsilon}\cdot \n)
\big|^2
$$
is at most
$$
k\cdot \E_{\n\in [1, [N/k]]^{k-1}} \big|\E_{m\in [1,N]} \prod_{{\uepsilon} \in V_{k-1}}  \mathcal{C}^{|{\uepsilon}|} a_{{\uepsilon},N}(m+{\uepsilon}\cdot \n)\big|^2,
$$
which in turn is at most
$$
k^k\cdot \E_{\n\in [1,N]^{k-1}} \big|\E_{m\in [1,N]} \prod_{{\uepsilon} \in V_{k-1}}  \mathcal{C}^{|{\uepsilon}|}a_{{\uepsilon},N}(m+{\uepsilon}\cdot \n)\big|^2.
$$
Using Lemma~\ref{L:GCS} we see that the last expression  is bounded by a constant multiple of
$$
\prod_{{\uepsilon}\in V_{k-1}}\nnorm{a_{{\uepsilon},N}}_{U_k(\Z_{N})}^{2}.
$$
Combining the above,  taking limits as $N\to \infty$, and  using Proposition~\ref{P:U_k}, we deduce that the left hand side of
\eqref{E:bas} is bounded by a constant multiple of
$$
 \prod_{{\uepsilon}\in V_{k-1}}\nnorm{\tilde{a}_{{\uepsilon}}}^{2}_{U_k(\N)}
$$
where $\tilde{a}_{{\uepsilon}}=(a_{{\uepsilon},N})_{N\in \N}$.
Furthermore, an easy computation shows that
$$
\nnorm{\tilde{a}_{\uepsilon}}_{U_k(\N)}=
\begin{cases}
k^{-1}\cdot \nnorm{a_{\bf 0}}_{U_k(\N)}, &\quad  \text{ for }  \ \uepsilon={\bf 0}\\  \nnorm{a_{\uepsilon}}_{U_k(\N)},&  \quad  \text{ for } \ {\uepsilon}\neq {\bf 0}.
\end{cases}
$$
This completes the proof.
\end{proof}
Applying the previous estimate  for suitably chosen sequences we get the following:
\begin{corollary}\label{C:key'}
Let $k\geq 2$ be an integer, $(X,\X,\mu)$ be a probability space,  and for ${\uepsilon}\in V_{k-1}$ let $T_{\uepsilon}\colon X\to X$ be measure preserving transformations, and $f_{\uepsilon}\in L^\infty(\mu)$ be functions. Furthermore, let $\mu=\int \mu_{x,T_{\uepsilon}} \ d\mu(x)$ be the ergodic decomposition of the measure $\mu$ with respect to $T_{\uepsilon}$.
Then for $\mu$ almost every $x\in X$ we have
$$
\limsup_{N\to\infty}
\E_{\n\in [1,N]^{k-1}}  \big|\E_{m\in [1, N]}
 \prod_{{\uepsilon} \in V_{k-1}} f_{{\uepsilon}}(T_{\uepsilon}^{m+{\uepsilon}\cdot \n}x)
\big|^2\ll_k
 \prod_{{\uepsilon}\in V_{k-1}} \nnorm{f_{\uepsilon}}^{2}_{k,\mu_{x, T_{\uepsilon}}}.
$$
\end{corollary}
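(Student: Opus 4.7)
The plan is to apply Proposition~\ref{P:key} pointwise in $x$ to the specific sequences arising from the dynamics, and then translate the resulting $U_k(\N)$-bounds into ergodic seminorms via Proposition~\ref{P:RelToErgodic}. This is essentially the only place where the dynamical structure is used, and it enters through the pointwise ergodic theorem built into Proposition~\ref{P:RelToErgodic}.

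More precisely, I would fix $x\in X$ and, for each ${\uepsilon}\in V_{k-1}$, define the bounded sequence
$$
a_{\uepsilon}(n) \mathrel{\mathop:}= f_{\uepsilon}(T_{\uepsilon}^n x), \qquad n\in\N.
$$
Since the left-hand side of the desired inequality is exactly
$$
\limsup_{N\to\infty}\E_{\n\in [1,N]^{k-1}}\big|\E_{m\in [1,N]} \prod_{{\uepsilon}\in V_{k-1}} a_{\uepsilon}(m+{\uepsilon}\cdot\n)\big|^2
$$
(the conjugates $\mathcal{C}^{|{\uepsilon}|}$ in Proposition~\ref{P:key} are harmless because they can be absorbed into the sequences $a_{\uepsilon}$ without changing the $U_k(\N)$-seminorms), Proposition~\ref{P:key} applied to the $a_{\uepsilon}$ gives the bound
$$
\ll_k \prod_{{\uepsilon}\in V_{k-1}} \nnorm{a_{\uepsilon}}^{2}_{U_k(\N)},
$$
valid for every $x\in X$ and with constant depending only on $k$.

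Next, Proposition~\ref{P:RelToErgodic} applied to the system $(X,\X,\mu,T_{\uepsilon})$ with its ergodic decomposition $\mu = \int \mu_{x,T_{\uepsilon}}\, d\mu(x)$ yields a full-measure set $E_{\uepsilon}\subset X$ on which
$$
\nnorm{a_{\uepsilon}}_{U_k(\N)} = \nnorm{f_{\uepsilon}(T_{\uepsilon}^n x)}_{U_k(\N)} = \nnorm{f_{\uepsilon}}_{k,\mu_{x,T_{\uepsilon}}}.
$$
Since the index set $V_{k-1}$ is finite (of cardinality $2^{k-1}$), the intersection $E\mathrel{\mathop:}=\bigcap_{{\uepsilon}\in V_{k-1}} E_{\uepsilon}$ still has full $\mu$-measure, and for every $x\in E$ we may substitute these identities into the Proposition~\ref{P:key} bound to obtain exactly the inequality claimed in the corollary.

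I do not expect any significant obstacle: the hard analytic work has already been done in Proposition~\ref{P:key} (where the non-periodic $U_k(\N)$ is controlled by the periodic Gowers norm through Proposition~\ref{P:U_k} and Lemma~\ref{L:VDC}), and Proposition~\ref{P:RelToErgodic} handles the passage from sequences to ergodic seminorms. The only point that deserves a line of care is the simultaneous full-measure statement for all ${\uepsilon}\in V_{k-1}$, which follows from the finiteness of $V_{k-1}$ and not from any uniformity in $\uepsilon$.
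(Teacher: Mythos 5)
Your proof is correct and follows essentially the same route as the paper: apply Proposition~\ref{P:key} to the sequences $a_{\uepsilon}(n)=f_{\uepsilon}(T_{\uepsilon}^nx)$ and then invoke Proposition~\ref{P:RelToErgodic}, intersecting the finitely many full-measure sets. Your side remarks about absorbing the conjugation operators $\mathcal{C}^{|{\uepsilon}|}$ and about the finiteness of $V_{k-1}$ are points the paper leaves implicit, but they are handled correctly.
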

\begin{proof}
Let 
$x\in X$.
Applying Proposition~\ref{P:key} for the sequences $a_{\uepsilon}(n)=f_{{\uepsilon}}(T_{{\uepsilon}}^nx)$, ${\uepsilon} \in V_{k-1}$, we get that the left hand side
is bounded by a constant multiple of
$$
\prod_{{\uepsilon}\in V_{k-1}} \nnorm{f_{{\uepsilon}}(T_{{\uepsilon}}^nx)}^{2}_{U_k(\N)}.
$$
Proposition~\ref{P:RelToErgodic}
gives that
  for  every ${\uepsilon}\in  V_{k-1}$,
  for $\mu$ almost every  $x\in X$, we have
$$
\nnorm{f_{\uepsilon}(T_{\uepsilon}^nx)}_{U_k(\N)}= \nnorm{f_{\uepsilon}}_{k,\mu_{x,T_{{\uepsilon}}}}.
$$
This completes the proof.
\end{proof}
We are now one small step from proving  Theorem~\ref{T:2'}.
\begin{proof}[Proof of Theorem~\ref{T:2'}]
Suppose that  $\norm{f_{\bf 1}}_{k,\mu,T_{\bf 1}}=0$, where ${\bf 1}=(1,0,\cdots, 0)$. The proof is similar in the other cases.
We want to show that for almost every $x\in X$
$$
\lim_{N\to\infty}\E_{\n\in [1,N]^{k}}
  \prod_{{\uepsilon} \in V^*_{k}} f_{{\uepsilon}}(T_{\uepsilon}^{{\uepsilon}\cdot \n}x)
  =0.
$$
Using the Cauchy-Schwarz inequality
and bounding
 all functions $f_{(0,\uepsilon)}$, where $\uepsilon\in V_{k-1}$, by their sup norm,
we deduce that the expression
$$
\big|\E_{\n\in [1,N]^{k}}
\prod_{{\uepsilon} \in V^*_{k}} f_{{\uepsilon}}(T_{\uepsilon}^{{\uepsilon}\cdot \n}x)
\big|^2
$$
is bounded by a constant multiple of an average of the form
\begin{equation}\label{E:dfg}
\E_{\n\in [1,N]^{k-1}}
\big|\E_{m\in [1, N]}  \prod_{{\uepsilon} \in V_{k-1}} \tilde{f}_{\uepsilon}(\tilde{T}_{\uepsilon}^{m+{\uepsilon}\cdot \n}x)
\big|^2,
\end{equation}
where $\tilde{f}_{\bf 1}=f_{\bf 1}$,  $\tilde{f}_{\uepsilon}\in \{f_{(1,\uepsilon)}, {\uepsilon} \in V^*_{k-1}\}$,  and $\tilde{T}_{\uepsilon}\in \{T_{(1,\uepsilon)}, {\uepsilon}\in V^*_{k-1}\}$ for ${\uepsilon}\in V^*_{k-1}$.

 Since $\nnorm{f_{\bf 1}}_{k,\mu,T_{\bf 1}}=0$ implies that $\nnorm{f_{\bf 1}}_{k,\mu_{x,T_{\bf 1}}}=0$ for $\mu$ almost every  $x\in X$, Corollary~\ref{C:key'} gives that
for $\mu$ almost every $x\in X$ the averages \eqref{E:dfg} converge to $0$. This  completes the   proof.
\end{proof}

\subsection{Convergence of cubic averages}\label{SS:2.2}
 In this section we are going to prove Theorem~\ref{T:2}.
A natural approach for establishing such a convergence result
   would be to try to combine  Theorem~\ref{T:2'} with   Theorem~\ref{T:Structure}, in order  to reduce matters to the case where all systems are nilsystems. Such an approach works well
 when all the transformations are equal, but in our more  general setup it presents problems
 that are difficult to circumvent. For instance, although it is possible to reduce matters to the case where for every $\uepsilon \in V_k^*$ the ergodic components of the transformation $T_{\uepsilon}$ are inverse limits of nilsystems, the various ergodic disintegrations and sub-$\sigma$-algebras involved in the inverse limits cannot be taken to be the same for each transformations $T_{\uepsilon}$ (even if the transformations commute).
To overcome this problem we
work pointwise, and use an approach similar to the one  in \cite{CFH09}.  We combine Theorem~\ref{T:2} with a pointwise  decomposition result  that applies to general (not necessarily ergodic) systems. It is a direct consequence of Proposition~3.1 from \cite{CFH09}
which in turn is a non-trivial
  consequence of the structure theorem of Host and Kra stated in  Theorem~\ref{T:Structure}.
\begin{proposition} \label{P:ApprNil}
Let $(X,\X,\mu,T)$ be a system, $f\in L^\infty(\mu)$,  and $k\in \N$.
Then for every $\varepsilon>0$, there exist measurable functions $f^s, f^u, f^e,$
with $L^\infty(\mu)$ norm at most $2\norm{f}_{L^\infty(\mu)}$, such that
\begin{enumerate}
\item $f=f^s+f^u+f^e$;
\item $\nnorm{f^u}_{k+1}=0$; \  $\norm{f^e}_{L^1(\mu)}\leq \varepsilon$; \ and
\item for $\mu$ almost every $x\in X$, the sequence $(f^s (T^nx))_{n\in\N}$
is a $k$-step nilsequence.
\end{enumerate}
\end{proposition}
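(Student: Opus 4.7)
The plan is to combine the Host--Kra structure theorem (Theorem~\ref{T:Structure}) with a martingale-type approximation on each ergodic component and a measurable-selection argument to package the local approximations as globally defined measurable functions.

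First, set $g\mathrel{\mathop:}=\E(f|\cZ_k)$ and $f^u\mathrel{\mathop:}= f-g$. The defining property \eqref{E:DefZ_l} of the factor $\cZ_k$ gives $\nnorm{f^u}_{k+1}=0$ at once, and standard estimates for conditional expectation yield $\|g\|_{L^\infty(\mu)}\leq\|f\|_{L^\infty(\mu)}$ and hence $\|f^u\|_{L^\infty(\mu)}\leq 2\|f\|_{L^\infty(\mu)}$. It then remains to decompose $g=f^s+f^e$ with $\|f^e\|_{L^1(\mu)}\leq\varepsilon$, with the nilsequence conclusion (iii) for $f^s$, and with both functions bounded by $2\|f\|_{L^\infty(\mu)}$.

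To build $f^s$, I would work fiber-wise over the ergodic decomposition. Theorem~\ref{T:Structure} furnishes, for $\mu$-a.e.\ $x$, an increasing sequence $(\X_j^{(x)})_{j\in\N}$ of $T$-invariant sub-$\sigma$-algebras of $\cZ_k$ such that $\bigvee_j\X_j^{(x)}=\cZ_k$ modulo $\mu_x$ and each $(X,\X_j^{(x)},\mu_x,T)$ is isomorphic to a $k$-step nilsystem. For each $x$, $L^1$ martingale convergence gives $\E_{\mu_x}(g|\X_j^{(x)})\to g$ in $L^1(\mu_x)$, so one can choose a level $j(x)$ making the error smaller than $\varepsilon$; transporting the conditional expectation through the isomorphism shows that, for that choice, the orbit $(\E_{\mu_x}(g|\X_{j(x)}^{(x)})(T^n x))_{n\in\N}$ is a basic $k$-step nilsequence, obtained by evaluating a continuous function on a nilmanifold at the orbit of a fixed nilrotation.

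The main obstacle is measurability: the filtration $(\X_j^{(x)})_j$, the underlying nilmanifolds, the isomorphisms, and the cutoff level $j(x)$ all depend on $x$, so defining a single measurable function $f^s$ on $X$ with the desired fiber-wise property demands a Borel selection. This is precisely the content of Proposition~3.1 of \cite{CFH09}, which performs the selection by parametrizing the relevant family of $k$-step nilmanifolds by a Polish space, choosing $j(x)$ measurably on each ergodic component, and truncating the resulting continuous fiber data to enforce $\|f^s\|_{L^\infty(\mu)}\leq\|f\|_{L^\infty(\mu)}$. Applying that result to $g$ with tolerance $\varepsilon$ and setting $f^e\mathrel{\mathop:}= g-f^s$ delivers (i)--(iii); the bound $\|f^e\|_{L^\infty(\mu)}\leq 2\|f\|_{L^\infty(\mu)}$ follows from the triangle inequality applied to $\|g\|_{L^\infty(\mu)}$ and $\|f^s\|_{L^\infty(\mu)}$.
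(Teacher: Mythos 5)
Your proposal follows essentially the same route as the paper, which gives no self-contained argument but deduces the statement directly from Proposition~3.1 of \cite{CFH09}; you merely prepend the standard split $f^u=f-\E(f|\cZ_k)$ (which, via \eqref{E:DefZ_l}, is exactly how the uniform part arises) and sketch how the cited proposition is proved before deferring to it for the measurable-selection step. The one imprecision is that $\E_{\mu_x}(g|\X_{j(x)}^{(x)})$ is only a measurable function on the limiting nilsystem, so a further $L^1$-approximation by continuous functions is needed before its orbit is a genuine $k$-step nilsequence, but this is absorbed into the result you (and the paper) cite.
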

Arithmetic versions of this result
were recently established  in \cite{GT10a} and in \cite{Sz10}.
  The reader is advised to think of the function $f^e$ as an error term; when one works with convergence problems it  typically
 can  be shown to have a negligible effect on our averages (but this is not the case  for recurrence problems unless
 one aims at a uniform lower bound). The function $f^u$ is
  the uniform component and it too can be neglected once the appropriate uniformity estimates are obtained. Finally,
   the function $f^s$ is the structured
  component; this has to be further analyzed, typically using equidistribution results on nilmanifolds.



\begin{proof}[Proof of Theorem~\ref{T:2}]
For $k=1$ the result follows from the pointwise ergodic theorem. So we can assume that $k\geq 2$.
Furthermore, we can assume that  $\norm{f_{\uepsilon}}_{L^\infty(\mu)}\leq 1$ for every ${\uepsilon} \in V^*_{k}$.
Let
$$
A_N(f_{\uepsilon})(x)\mathrel{\mathop:}=
\E_{\n\in [1,N]^{k}}
\prod_{{\uepsilon} \in V^*_{k}} f_{{\uepsilon}}(T_{\uepsilon}^{{\uepsilon}\cdot \n}x).
$$
We are going to show that for $\mu$ almost every $x\in X$ the sequence $(A_N(f_{\uepsilon})(x))_{N\in\N}$
is Cauchy.

By  Proposition~\ref{P:ApprNil}, we have that for every  $m\in\N$ and ${\uepsilon}\in V^*_k$, there exist measurable functions
$f^s_{{\uepsilon},m}, f^u_{{\uepsilon},m}, f^e_{{\uepsilon},m}$,  with  $L^\infty(\mu)$ norm bounded by $2$, and such that
\begin{enumerate}
\item $f_{{\uepsilon}}=f^s_{{\uepsilon},m}+ f^u_{{\uepsilon},m}+ f^e_{{\uepsilon},m}$;
\item \label{E:ue} $\nnorm{f^u_{{\uepsilon},m}}_{k,T_{\uepsilon}}=0$;  \ $\norm{f^e_{{\uepsilon},m}}_{L^1(\mu)}\leq 1/m$; \ and
\item \label{E:s} for $\mu$ almost every $x\in X$, the sequence $(f^s_{{\uepsilon},m} (T_{\uepsilon}^nx))_{n\in\N}$
is a $(k-1)$-step nilsequence.
\end{enumerate}

First we study the contribution of the functions $f^u_{{\uepsilon},m}$. Combining  property \eqref{E:ue} with  Theorem~\ref{T:2'}, we see that
when evaluating the limit of the averages $A_N(f_{\uepsilon})$, we  can ignore the contribution of these functions, namely,
for every $m\in\N$,  for $\mu$ almost every $x\in X$ we have
\begin{equation}\label{E:AN1}
\lim_{N\to\infty}|A_N(f_{{\uepsilon}})(x) -A_N(f^s_{{\uepsilon},m}+f^e_{{\uepsilon},m})(x) |=0.
\end{equation}

Next, we study the contribution of the functions $f^e_{{\uepsilon},m}$. We are going to show that  this too is essentially negligible,
as long as we consider suitably large values of $m$. Indeed, if we expand the expression $A_N(f^s_{{\uepsilon},m}+f^e_{{\uepsilon},m})-A_N(f^s_{{\uepsilon},m})$,
 use Corollary~\ref{C:key'} to bound each of the terms, and also use that
$ \nnorm{f^e_{{\uepsilon},m}}_{k,\mu_{x,T_{{\uepsilon}}}}\leq  2$ and
$\nnorm{f^s_{{\uepsilon},m}}_{k,\mu_{x,T_{{\uepsilon}}}}\leq 2$, we get
 for $\mu$ almost every $x\in X$ the bound
$$
\limsup_{N\to\infty}|A_N(f^s_{{\uepsilon},m}+f^e_{{\uepsilon},m})(x)-
A_N(f^s_{{\uepsilon},m})(x)|\ll_k
\max_{{\uepsilon}\in V_k^*} \nnorm{f^e_{{\uepsilon},m}}_{k,\mu_{x,T_{{\uepsilon}}}}
\ll \max_{{\uepsilon}\in V_k^*} \norm{f^e_{{\uepsilon},m}}_{L^1(\mu_{x,T_{{\uepsilon}}})}.
$$
By property $(ii)$ we have  $\lim_{m\to\infty}\int|f^e_{{\uepsilon},m}|\ d\mu=0$
for ${\uepsilon}\in V^*_k$., and as a consequence  there exists a  sequence $(m_l)_{l\in\N}$, with
$m_l\to \infty$, and such that for $\mu$ almost every $x\in X$ we have
$$
\lim_{l\to\infty}\int|f^e_{{\uepsilon},m_l}|\ d\mu_{x,T_{\uepsilon}}=0
$$
 for every ${\uepsilon}\in V^*_k$. From the preceding discussion it follows that for $\mu$ almost every
 $x\in X$
\begin{equation}\label{E:AN2}
\lim_{l\to\infty}\limsup_{N\to\infty}|A_N(f^s_{{\uepsilon},m_l}+f^e_{{\uepsilon},m_l})(x) -A_N(f^s_{{\uepsilon},m_l})(x)|=0.
\end{equation}

Combining \eqref{E:AN1} and \eqref{E:AN2} we get for $\mu$ almost every $x\in X$ that
\begin{equation}\label{E:iou}
\lim_{l\to\infty}\limsup_{N\to\infty}|A_N(f_{\uepsilon})(x)
-A_N(f^s_{{\uepsilon},m_l})(x)|=0.
\end{equation}
Since  by property $(iii)$, for $\mu$ almost every $x\in X$, for every $l\in \N$, and $\uepsilon\in V_k^*$, the sequence $(f^s_{{\uepsilon},m_l} (T_{\uepsilon}^nx))_{n\in\N}$
is a  nilsequence,  it follows  from Theorem~\ref{T:ConvNil} that for  $\mu$ almost every $x\in X$, for every $l\in \N$,
the averages $A_N( f^s_{{\uepsilon},m_l})(x)$ converge.
Combining this with \eqref{E:iou}, we deduce that for  $\mu$ almost every $x\in X$ the sequence $(A_N(f_{\uepsilon})(x))_{N\in\N}$ is Cauchy. This  completes the proof.
\end{proof}

\section{Characteristic factors and convergence for polynomial  averages}
In this section we are going to prove Theorems~\ref{T:PoliesConv1} and \ref{T:PoliesChar}. As was the case with the cubic
averages, some uniformity estimates for general bounded sequences play a key role in the argument.
We start with establishing these.
\subsection{Uniformity estimates}
We remind the reader that
in the forthcoming statements $b\colon\N\to\N$ is a sequence that satisfies
$$
b(N)\to \infty \quad  \text{ and } \quad b(N)/N^{1/d}\to 0
$$ where  $d$ is the maximum degree of the polynomials involved
in each statement. To avoid confusion, let  us also remark that 
none of the sequences defined defined in this section is
assumed to be periodic.

Our goal in this section is to establish the following estimate:
\begin{proposition}\label{P:polies}
Let $a_1,\ldots, a_\ell\colon \N\to \C$ be bounded sequences and  $p_1,\ldots,p_\ell\in \Z[t]$ be non-constant polynomials such that   $p_i-p_j$ is non-constant for $i\neq j$.
Then there exists $k\in \N$, depending only on $\ell$  and the maximum degree of the polynomials $p_1,\ldots,p_\ell$, such that if  $\nnorm{a_i}_{U_{k}(\N)}=0$ for some $i\in \{1,\ldots,\ell\}$, then
$$
\lim_{N\to\infty}  \E_{ m\in [1,N]}\Big|\E_{ n\in [1, b(N)]}
\prod_{i=1}^\ell   a_i(m+p_i(n))\Big|^2=0.
$$
\end{proposition}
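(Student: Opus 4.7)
The strategy is to reduce by \emph{PET} (polynomial exhaustion technique, in the sense of Bergelson) to a linear polynomial system, and then invoke an argument in the spirit of Proposition~\ref{P:key} to bound the result by $U_k(\N)$-norms of the $a_i$. Expanding the square,
$$
Q_N \mathrel{\mathop:}= \E_{m\in [1,N]}\Big|\E_{n\in [1,b(N)]}\prod_{i=1}^{\ell} a_i(m+p_i(n))\Big|^2 = \E_{m,n,n'}\prod_{i=1}^{\ell}a_i(m+p_i(n))\bar a_i(m+p_i(n')).
$$
Setting $h=n-n'$, shifting $m \mapsto m - p_{i_0}(n')$ for a chosen index $i_0$, and using the growth hypothesis $b(N)=o(N^{1/d})$ to absorb the boundary error of size $O(b(N)^{d}/N)$ coming from this shift, one identifies $Q_N$ up to $o(1)$ with
$$
\E_{h}\,\E_{n'\in [1,b(N)]}\,\E_{m\in [1,N]}\prod_{j=1}^{2\ell-1}c_j\big(m+q_{j,h}(n')\big),
$$
where each $c_j$ is some $a_i$ or $\bar a_i$, and for all but $O(1)$ values of $h$ the polynomials $q_{j,h}$ are non-constant with non-constant pairwise differences, thanks to the hypothesis that the $p_i$ and the $p_i-p_j$ are non-constant.

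The next step is to iterate. Applying Lemma~\ref{L:VDC} in $n'$ and again shifting $m$ to eliminate one polynomial, one strictly decreases a standard PET weight vector attached to the polynomial family — roughly, a lexicographically ordered tuple counting the number of polynomials of each degree. After a bounded number of steps depending only on $\ell$ and $d$, the remaining system consists of linear polynomials in the last averaging variable with distinct nonzero leading coefficients (depending on the accumulated auxiliary variables). At that stage the inner $m$-average is a product along distinct linear shifts, which by the Gowers-Cauchy-Schwarz inequality (Lemma~\ref{L:GCS}) combined with Proposition~\ref{P:U_k} is bounded by a product
$$
\prod_i \nnorm{a_i}_{U_k(\N)}^{\alpha_i}
$$
with positive exponents $\alpha_i > 0$ and $k$ depending only on $\ell$ and $d$. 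The vanishing of any one $\nnorm{a_i}_{U_k(\N)}$ then forces $Q_N\to 0$.

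The principal obstacle is the PET bookkeeping: one must precisely define the weight vector of a multi-variable polynomial family, verify that each van der Corput step followed by an appropriate $m$-shift strictly decreases this vector in the chosen ordering, and check that the auxiliary polynomials produced at every stage remain non-constant and pairwise distinct — a property inherited inductively from the distinctness and non-constancy of the $p_i-p_j$, which is where the hypothesis on the polynomials is essential. A secondary technical point is the asymmetry between the $m$-range $[1,N]$ and the $n$-range $[1,b(N)]$: repeated shifts of $m$ by polynomial values introduce boundary errors of total size $O(b(N)^d/N) = o(1)$, which is precisely controlled by the growth condition on $b(N)$ and allows the argument to freely exchange $m$-averages over $[1,N]$ and over shifted intervals without affecting the limit.
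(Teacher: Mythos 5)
Your overall strategy---expand the square, apply van der Corput in the short variable together with a shift of $m$ (using $b(N)^d=o(N)$ to control boundary terms), run a PET induction down to a linear family, and finish with a uniformity estimate---is the same as the paper's, which implements it by first reducing to an auxiliary statement (Proposition~\ref{P:polies'}) in which the sequence with vanishing uniformity norm sits on a polynomial of maximal degree, and then inducting on the type of a ``nice'' ordered family. However, your endgame contains a genuine gap. At the linear stage you claim the $m$-average is bounded, via the Gowers--Cauchy--Schwarz inequality (Lemma~\ref{L:GCS}) and Proposition~\ref{P:U_k}, by a product $\prod_i\nnorm{a_i}_{U_k(\N)}^{\alpha_i}$ with \emph{all} $\alpha_i>0$. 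Lemma~\ref{L:GCS} does not apply here: it concerns the parallelepiped configuration $m+\uepsilon\cdot\n$ with $\n$ ranging over the full cube $[1,N]^{k-1}$, whereas your linear stage is $\E_{m\in[1,N]}\bigl|\E_{n\in[1,b(N)]}\prod_j c_j(m+k_jn)\bigr|^2$ with a single averaging variable over the short range $[1,b(N)]$. What actually works (and is what the paper does, via Lemma~\ref{L:U_2} and induction on $\ell$) is a further iterated van der Corput argument, and that argument bounds everything but \emph{one} designated sequence by its sup norm, yielding control only of $\nnorm{a_{i_0}}_{U_{\ell+1}(\N)}$ for a single index. The conclusion is still salvageable---by symmetry one fixes in advance the index $i$ with $\nnorm{a_i}_{U_k(\N)}=0$ and proves the estimate for that index---but your stated mechanism does not deliver a product over all indices.

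This interacts with the second, related gap: once only one sequence's uniformity is being tracked, you must steer the PET induction so that this sequence is never the one absorbed into a sup-norm bound, i.e.\ its polynomial must never be the one subtracted off and must retain maximal degree throughout. This is exactly what the paper's initial change of variables (choosing $p$ so that $p+p_1$ has maximal degree in $\{p,p+p_1,\dots,p+p_\ell\}$), the ``nice family'' condition, and Lemma~\ref{L:IndVDC} are engineered to guarantee. In your write-up the choice of the shift index $i_0$ is left unconstrained, and the ``PET bookkeeping'' is flagged as an obstacle but not resolved; note in particular that an unlucky choice of $i_0$ can cancel the leading term of the polynomial attached to the designated sequence and drop its degree, after which a later van der Corput step may force you to discard that sequence. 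Supplying this invariant (designated polynomial of maximal degree, with all differences against it non-constant, preserved under each $\vdc$ operation) is the substantive missing content.
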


It will be more convenient for us to prove a somewhat more involved statement, where
 the uniform
sequence is associated with the polynomial of maximal degree:
\begin{proposition}\label{P:polies'}
Let $a_1\colon \N\to \C$ be a bounded sequence and
$a_{2,N},\ldots, a_{\ell,N}\colon \N\to \C$, $N\in\N$, be a collection of uniformly bounded  sequences. Furthermore,  let $p_1,\ldots,p_\ell\in \Z[t]$ be  polynomials such that  $p_1-p_i$ is non-constant for $i=2,\ldots,\ell$, and suppose that $\text{deg}(p_1)\geq \text{deg}(p_i)$ for $i=2,\ldots,\ell$.
Then there exists $k\in \N$, depending only on  $\ell$ and on $\text{deg}(p_1)$,  such that if  $\nnorm{a_1}_{U_{k}(\N)}=0$, then
\begin{equation}\label{E:sce}
\lim_{N\to\infty}  \E_{ m\in [1, N]}\Big|\E_{n\in [1, b(N)]}
\Big(a_1(m+p_1(n))\cdot \prod_{i=2}^\ell   a_{i,N}(m+p_i(n))\Big)\Big|^2=0.
\end{equation}
\end{proposition}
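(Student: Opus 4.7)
The plan is a polynomial van der Corput (PET) induction, iterating Lemma~\ref{L:VDC} until the average reduces to a Gowers-type configuration on $a_1$ alone. First, expand the square in \eqref{E:sce} and switch the order of summation; the assumption $b(N)/N^{1/d}\to 0$ absorbs the boundary errors arising from shifting the $m$-variable, so after the change of variables $n_1=n+h$, $n_2=n$ followed by $m\mapsto m-p_1(n)$, the left side of \eqref{E:sce} equals, up to $o(1)$,
\[
\E_{h,n}\,\E_{m\in[1,N]}\, a_1(m+q_1(n,h))\,\bar a_1(m)\prod_{i=2}^\ell a_{i,N}(m+r_i(n,h))\,\bar a_{i,N}(m+s_i(n,h)),
\]
where $q_1(n,h)=p_1(n+h)-p_1(n)$ has degree $d-1$ in $n$, and $r_i(n,h)=p_i(n+h)-p_1(n)$, $s_i(n,h)=p_i(n)-p_1(n)$ are the analogous polynomial differences. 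The number of sequences has doubled, but the polynomial attached to the distinguished sequence $a_1$ has dropped in degree.

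Iterating this process (Cauchy--Schwarz/vdC in the $n$-variable, followed by shifting $m$) in the spirit of the Bergelson--Leibman PET induction, one attaches to each intermediate polynomial system a \emph{weight vector} that records the number of polynomials of each degree appearing in the correlation. With the right pivot choice at each stage, the weight vector strictly decreases in lexicographic order; the hypotheses $\deg p_1\geq\deg p_i$ and the non-constancy of $p_1-p_i$ for $i\geq 2$ are precisely what ensure that $a_1$ can be preserved as the distinguished factor throughout the iteration. After a bounded number of steps $k_0=k_0(\ell,d)$, all auxiliary sequences $a_{i,N}$ have been removed by sup-norm bounds, and the residual expression is an iterated cube average in $a_1$ of the form $\E_{\n}\,\bigl|\E_{m}\prod_{\uepsilon\in V_{k-1}}\mathcal{C}^{|\uepsilon|} a_1(m+\uepsilon\cdot\n)\bigr|^2$ for some $k\leq k_0$.

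At that point, Lemma~\ref{L:GCS} combined with Proposition~\ref{P:U_k} bounds this cube average by a constant multiple of $\nnorm{a_1}_{U_k(\N)}^{2^k}$, with $k$ depending only on $\ell$ and $\deg p_1$. Taking this value of $k$ in the hypothesis $\nnorm{a_1}_{U_k(\N)}=0$ forces the left side of \eqref{E:sce} to vanish. The main obstacle is the PET bookkeeping of the middle step: one must show that the vdC pivot at each iteration can be chosen to both strictly reduce the weight vector and keep $a_1$ as the distinguished factor, so that the final Gowers estimate involves $\nnorm{a_1}_{U_k(\N)}$ rather than a norm of one of the $N$-dependent sequences $a_{i,N}$. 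This is exactly where the degree dominance and non-degeneracy hypotheses on the polynomial system $p_1,\ldots,p_\ell$ are used in an essential way.
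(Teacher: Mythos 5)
Your overall strategy---van der Corput in the $n$-variable combined with a PET induction that keeps $a_1$ as the distinguished factor until a Gowers-norm bound on $a_1$ emerges---is the same as the paper's, but the two places where you supply or defer details are exactly where the proposal has genuine gaps. First, your explicit opening move shifts $m$ by $p_1(n)$. Unless every $p_i$ has the same degree \emph{and} the same leading coefficient as $p_1$, the polynomial $s_i(n,h)=p_i(n)-p_1(n)$ attached to $\bar a_{i,N}$ has degree $d=\deg p_1$, which exceeds $\deg_n q_1(n,h)=d-1$; the distinguished sequence is then no longer attached to a polynomial of maximal degree, the family is no longer ``nice'', and the inductive hypothesis cannot be applied. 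This is why the paper's Lemma~\ref{L:IndVDC} chooses the pivot according to the configuration (in particular, a polynomial of \emph{minimal} degree when the degrees are not all equal) rather than always taking $p_1$. Second, the invariant you propose---``the number of polynomials of each degree''---does not decrease under the van der Corput operation: the operation replaces $\ell$ polynomials by up to $2\ell$ polynomials $p_i-p$, $S_rp_i-p$. For instance $\{n^2,2n^2\}$ with pivot $2n^2$ becomes $\{-n^2,\,-n^2+2rn+r^2,\,4rn+2r^2\}$, so your vector goes from $(2,0)$ to $(2,1)$ and \emph{increases} lexicographically. The invariant that works is the paper's \emph{type}, which counts distinct \emph{leading coefficients} in each degree class (here $(2,2,0)$ drops to $(2,1,1)$). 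Since you flag the bookkeeping as ``the main obstacle'' still to ``be shown'', and the concrete version you sketch is the one that fails, this is a real gap rather than an omitted routine verification.

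A smaller discrepancy concerns the endgame. The iteration does not terminate in a single joint cube average $\E_{\n\in[1,N]^{k-1}}\big|\E_{m}\prod_{\uepsilon}\mathcal{C}^{|\uepsilon|}a_1(m+\uepsilon\cdot\n)\big|^2$ to which Lemma~\ref{L:GCS} and Proposition~\ref{P:U_k} apply directly: what comes out is an iterated expression in which each van der Corput parameter is averaged over $[1,R_j]$ with $R_j\to\infty$ only after $N\to\infty$, the linear forms carry dilation constants coming from the leading coefficients, and the auxiliary sequences survive to the last stage. The paper sidesteps all of this by treating the linear case as a separate induction on the number of sequences (Lemma~\ref{L:U_2} together with the estimate \eqref{E:trivial}) and concluding via the recursive identity \eqref{E:inductive}, which lands directly on $\nnorm{a_1}_{U_{\ell+1}(\N)}$ without any appeal to the Gowers--Cauchy--Schwarz inequality or to Proposition~\ref{P:U_k}.
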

\begin{proof}[Proof of Proposition~\ref{P:polies} assuming  Proposition~\ref{P:polies'}]
Let $\{p_1,\ldots,p_\ell\}$ be a family of polynomials that satisfies the assumptions of
Proposition~\ref{P:polies}. Because of the  symmetry of the statement of Proposition~\ref{P:polies}
it suffices to establish its conclusion  when $i=1$.

For $N\in \N$ we define a sequence $a_{0,N}\colon \N\to \C$ by
$$
a_{0,N}(m)\mathrel{\mathop:}=\E_{n\in [1, b(N)]}
 \prod_{i=1}^\ell   \bar{a}_i(m+p_i(n)).
$$
Then
\begin{equation}\label{E:Ntrick}
  \E_{m\in [1, N]}\Big|\E_{n\in [1, b(N)]}
\prod_{i=1}^\ell   a_i(m+p_i(n))\Big|^2=
\E_{ m\in [1, N]}\E_{ n\in [1, b(N)]}
\Big(a_{0,N}(m) \prod_{i=1}^\ell   a_i(m+p_i(n))\Big).
\end{equation}

 Let $p\in \{p_1,\ldots,p_\ell\}$ be any polynomial such that
 the polynomial $p+p_1$ has maximal degree within the family $\{p,p+p_1,\ldots,p+p_\ell\}$.
Making the change of variables $m\to m+p(n)$, and using our growth assumption $p(b(N))/N\to 0$,    we see that the difference of the averages
\begin{equation}\label{E:Ntrick1}
\E_{ n\in [1, b(N)]}\E_{ m\in [1, N]}
\Big(a_{0,N}(m) \prod_{i=1}^\ell   a_i(m+p_i(n))\Big)
\end{equation}
and the averages
\begin{equation}\label{E:Ntrick2}
\E_{n\in [1, b(N)]}\E_{m\in [1, N]}
\Big(a_{0,N}(m+p(n)) \prod_{i=1}^\ell   a_i(m+p(n)+p_i(n))\Big)
\end{equation}
converges to $0$ as $N\to \infty$.
Since by assumption the polynomials $p_1$ and $p_1-p_i$ are non-constant for $i=2,\ldots,\ell$, and   by the choice of $p$  the polynomial $p+p_1$ has maximal degree within the family $\{p,p+p_1,\ldots,p+p_\ell\}$,
the assumptions of Proposition~\ref{P:polies'} are satisfied, where the role of $p_1$ plays the polynomial $p+p_1$.
Using the Cauchy-Schwarz inequality we conclude that there exists $k\in \N$, depending only on $\ell$ and on $\text{deg}(p+p_1)$, such that if  $\nnorm{a_1}_{U_{k}(\N)}=0$, then the averages \eqref{E:Ntrick2} converge to $0$ as $N\to\infty$. As a consequence, the averages \eqref{E:Ntrick1} converge to $0$ as $N\to\infty$.
The result now follows from  \eqref{E:Ntrick}.
\end{proof}
We are going to prove Proposition~\ref{P:polies'} by repeated applications of  the following consequence of van der Corput's fundamental estimate (see, for example,  Lemma~3.1 in  \cite{KN74}):
\begin{corollary} \label{C:VDC2} Let $N\in \N$ and  $a(1),\ldots, a(N)$ be complex numbers bounded by $1$.
Then for every integer  $R$ between $1$ and $N$ we have
$$
\big|\E_{ n\in [1, N]} a(n)\big|^2\leq 4\cdot
\Big(\E_{r\in [1, R]} (1-rR^{-1}) \Re\big(\E_{ n\in [1, N]}a(n+r)\cdot \bar{a}(n)\big) + R^{-1}+RN^{-1}
\Big).
$$
\end{corollary}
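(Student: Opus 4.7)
The plan is to adapt the proof of Lemma \ref{L:VDC} (the periodic van der Corput estimate used earlier in the paper) to the non-periodic setting, carefully tracking the boundary terms that arise when $a$ lives on a finite interval rather than on a cyclic group. First, I would extend $a$ to all of $\Z$ by setting $a(n) = 0$ for $n \notin [1,N]$. For every $r \in [1,R]$, shift invariance of the counting measure on $\Z$ yields $\sum_{n=1}^{N} a(n) = \sum_{n \in \Z} a(n+r)$, and averaging over $r \in [1,R]$ gives $\sum_{n=1}^{N} a(n) = \sum_{n \in \Z} \E_{r \in [1,R]} a(n+r)$. The summand is supported on a set of cardinality at most $N + R - 1$, so the Cauchy--Schwarz inequality produces
\[
\Bigl|\sum_{n=1}^{N} a(n)\Bigr|^{2} \le (N + R - 1) \sum_{n \in \Z}\Bigl|\E_{r \in [1,R]} a(n+r)\Bigr|^{2}.
\]

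Next, I would expand the inner squared modulus as $R^{-2}\sum_{r, r'=1}^{R} a(n+r)\bar{a}(n+r')$, interchange order of summation with the sum in $n$, and substitute $m = n + r'$. Grouping by $h = r - r'$ and using that the terms indexed by $h$ and $-h$ are complex conjugates (with identical multiplicity $R - |h|$), the diagonal contribution $h=0$ gives $R \sum_{m=1}^{N} |a(m)|^{2}$ and the off-diagonal contributions give $2 \sum_{h=1}^{R-1} (R - h) \Re \sum_{m=1}^{N-h} a(m+h) \bar{a}(m)$. Applying the elementary telescoping identity $\sum_{h=1}^{R-1}(R-h) X_{h} = R^{2}\, \E_{r \in [1,R]}(1 - r/R) X_{r}$ (the summand at $r = R$ vanishes) and rewriting the inner $m$-sums as expectations $\E_{m \in [1,N]}$ with $a$ extended by zero, one arrives at the intermediate bound
\[
\bigl|\E_{n \in [1,N]} a(n)\bigr|^{2} \le \tfrac{N+R-1}{RN}\, \E_{m}|a(m)|^{2} + \tfrac{2(N+R-1)}{N}\, \E_{r \in [1,R]}(1 - r/R)\, \Re\, \E_{m \in [1,N]} a(m+r)\bar{a}(m).
\]

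To conclude, I would invoke the hypotheses $R \le N$ and $|a(n)| \le 1$ to estimate $\E_{m}|a(m)|^{2} \le 1$, $\tfrac{N+R-1}{RN} \le R^{-1} + N^{-1}$, and $\tfrac{2(N+R-1)}{N} \le 4$, then observe that $R^{-1} + N^{-1} \le 4 R^{-1} + 4 R N^{-1}$, which delivers the claimed inequality. The main technical point I expect to be the obstacle is the bookkeeping of the boundary corrections coming from extending $a$ by zero: these account simultaneously for the extra $R N^{-1}$ error term and for the doubling of the leading constant from $2$ (in the periodic Lemma \ref{L:VDC}) to $4$ here. A small subtlety to verify is that the asymmetric replacement of the coefficient $\tfrac{2(N+R-1)}{N}$ in front of the real-part term by the larger constant $4$ is still consistent with an upper bound when that real part is negative; this is so because nonnegativity of $|\E_{n} a(n)|^{2}$ together with the intermediate estimate forces $\E_{r}(1-r/R)\, \Re\, \E_{n} a(n+r)\bar{a}(n) \ge -\tfrac{1}{2R}$, after which the slack in $4 R^{-1} + 4 R N^{-1}$ comfortably absorbs the discrepancy.
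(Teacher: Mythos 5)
The paper does not actually prove this corollary: it is invoked as a known consequence of van der Corput's fundamental estimate with a pointer to Lemma~3.1 of \cite{KN74}, so there is no in-paper argument to compare against line by line. Your proof is correct and self-contained; it follows the same skeleton as the paper's proof of Lemma~\ref{L:VDC} (average over shifts, Cauchy--Schwarz, expand, collect by the difference $h=r-r'$), with the extra bookkeeping required because $a$ lives on $[1,N]$ rather than on $\Z_N$: the support of $\E_{r\in[1,R]}a(n+r)$ has size $N+R-1$, which is the source of both the $RN^{-1}$ error term and the constant $4$ in place of $2$. I checked the intermediate identity and it is exact: $\sum_{n\in\Z}\bigl|\E_{r\in[1,R]}a(n+r)\bigr|^{2}=R^{-1}N\,\E_{m}|a(m)|^{2}+2N\,\E_{r\in[1,R]}(1-r/R)\Re\,\E_{m\in[1,N]}a(m+r)\bar a(m)$ under the zero-extension convention, which matches your displayed bound after multiplying by $(N+R-1)/N^{2}$. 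The one genuinely delicate point is the last step, and you handle it correctly: since the left-hand side of your intermediate bound is nonnegative, the real-part term $B'$ satisfies $B'\ge -\tfrac{1}{2R}\E_{m}|a(m)|^{2}\ge-\tfrac1{2R}$, so replacing the coefficient $\tfrac{2(N+R-1)}{N}\in[2,4]$ by $4$ costs at most $(4-\tfrac{2(N+R-1)}{N})\cdot\tfrac1{2R}\le R^{-1}$, which is absorbed by the slack between $R^{-1}+RN^{-1}$ and $4R^{-1}+4RN^{-1}$. One cosmetic remark: the corollary as stated leaves $a(n+r)$ for $n+r>N$ undefined; your zero-extension convention is the standard reading and is consistent with how the corollary is applied later in the paper (the discrepancy with any other reasonable convention is $O(R/N)$ and is covered by the $RN^{-1}$ term).
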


\subsubsection{The linear case}
The next lemma will be used to prove the linear case of Proposition~\ref{P:polies}. Furthermore, its proof
contains
the  main technical maneuver needed to carry out the inductive step in the proof of   Proposition~\ref{P:polies}.
\begin{lemma}\label{L:U_2}
Let $a\colon \N\to \C$ be a sequence bounded by $1$.
Then
$$
\limsup_{N\to\infty} \E_{ m\in [1,N]} |\E_{n\in [1, b(N)]}   a(m+n)|^2\ll \nnorm{a}_{U_2(\N)}.
$$
\end{lemma}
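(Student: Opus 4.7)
The plan is to apply van der Corput (Corollary~\ref{C:VDC2}) to the inner average $\E_{n\in [1,b(N)]} a(m+n)$ for each fixed $m$, swap the resulting double averages, and use the growth hypothesis $b(N)/N\to 0$ (which here reads $b(N)/N^{1/d}\to 0$ with $d=1$) in order to replace the resulting correlations by expressions of the shape $\E_{m\in [1,N]} a(m+r)\bar a(m)$, which is exactly what appears in $\nnorm{a}_{U_2(\N)}$.

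Concretely, for each $m\in [1,N]$ and any integer $R\leq b(N)$, applying Corollary~\ref{C:VDC2} to the sequence $n\mapsto a(m+n)$ gives
$$
|\E_{n\in [1,b(N)]} a(m+n)|^2\leq 4\Bigl(\E_{r\in [1,R]}(1-r/R)\,\Re\,\E_{n\in [1,b(N)]} a(m+n+r)\bar a(m+n) + 1/R + R/b(N)\Bigr).
$$
Averaging over $m\in [1,N]$, swapping $\E_m$ with $\E_r\E_n$, and performing the change of variables $m\mapsto m-n$ in the innermost average (whose boundary cost is $O(b(N)/N)=o(1)$) makes the resulting correlation independent of $n$. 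Bounding $\Re(\cdot)\leq |\cdot|$ and dropping the factor $(1-r/R)$, one obtains for any fixed $R$
$$
\limsup_{N\to\infty}\E_{m\in[1,N]} |\E_{n\in [1,b(N)]} a(m+n)|^2\leq 4\,\limsup_{N\to\infty}\E_{r\in [1,R]} |\E_{m\in [1,N]} a(m+r)\bar a(m)| + O(1/R).
$$

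To finish I would apply the Cauchy-Schwarz inequality to the finite average over $r$, use $(\limsup_N x_N)^2\leq \limsup_N x_N^2$ together with Fatou on the finite sum over $r$, and then let $R\to \infty$. By the very definition of $\nnorm{a}_{U_2(\N)}^4$, the right-hand side converges to $4\,\nnorm{a}_{U_2(\N)}^2$. Since $|a|\leq 1$ forces $\nnorm{a}_{U_2(\N)}\leq 1$, this in particular yields the advertised $\ll\nnorm{a}_{U_2(\N)}$.

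There is no serious obstacle in this argument; the only accounting points that require care are that the boundary error $O(b(N)/N)$ produced by the variable shift $m\mapsto m-n$ is negligible precisely because of the growth hypothesis on $b(N)$, and that the exchange of $\limsup_N$ with the finite average over $r\in [1,R]$ is harmless since that sum has only $R$ terms. The same three-step template—van der Corput, shift and swap, recognize a lower-order $U_k$ average—will be the engine of the inductive step that handles higher-degree polynomials in Proposition~\ref{P:polies'}; the real difficulty lies there, not in the linear base case treated here.
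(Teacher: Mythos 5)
Your proof is correct and follows essentially the same route as the paper's: van der Corput applied to the inner average, interchange of the averages combined with the shift $m\mapsto m-n$ whose boundary cost is controlled by $b(N)/N\to 0$, then Cauchy--Schwarz in $r$ and $R\to\infty$ to recognize $\nnorm{a}_{U_2(\N)}^2\leq\nnorm{a}_{U_2(\N)}$. The only difference is that you spell out the bookkeeping (the $R/b(N)$ error term and the exchange of $\limsup_N$ with the finite average over $r$) slightly more explicitly than the paper does.
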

\begin{proof}
Since $b(N)\to \infty$ as $N\to \infty$, by Corollary~\ref{C:VDC2} we get that for every $R\in\N$ the limit
$$
\limsup_{N\to\infty} \E_{ m\in [1, N]} |\E_{n\in [1, b(N)]}   a(m+n)|^2
$$
is bounded by $4$ times the expression
$$
  \limsup_{N\to\infty} \E_{ m\in [1,  N]} \E_{ r\in [1, R]}(1-rR^{-1})
 \Re\big( \E_{ n\in [1, b(N)]}   a(m+n+r)
\cdot \bar{a}(m+n)\big)+ R^{-1}.
$$
We interchange averages and make the change of variables $m\to m-n$.
Since
$b(N)/N\to 0$ and the sequence $(a(n))_{n\in\N}$ is bounded,  we deduce that the last expression is equal to
$$
\limsup_{N\to\infty}  \E_{ r\in [1, R]}(1-rR^{-1})
 \Re\big(
  \E_{ m\in [1, N]} a\big(m+r\big)
\cdot \bar{a}(m)\big)+ R^{-1}.
$$
Finally, letting $R\to \infty$ we get that the original limit  is bounded by
$$
\limsup_{N\to\infty}\E_{r\in [1,N]}\limsup_{N\to\infty} |\E_{  m\in [1, N]}  a(m+r)
\cdot \bar{a}(m)|\leq \nnorm{a}_{U_2(\N)}^2.
$$
Since $\nnorm{a}_{U_2(\N)}\leq 1$,  this establishes the advertised estimate.
\end{proof}

\begin{proof}[Proof of Proposition~\ref{P:polies'} for linear polynomials]
For notational convenience we let
$a_{1,N}\mathrel{\mathop:}=a_1$ for every $N\in\N$.
It suffices to  show that if  $\norm{a_{i,N}}_\infty\leq 1$ for  $i=1,\ldots,\ell$ and $N\in\N$, then
\begin{equation}\label{E:ell}
\limsup_{N\to\infty} \E_{ m\in [1, N]} \big|\E_{ n\in [1,b(N)]}  \prod_{i=1}^\ell a_{i,N}(m+k_in)\big|^2\ll_{k_1,\ldots,k_\ell}
  \nnorm{a_1}_{U_{\ell+1}(\N)}.
\end{equation}

We use induction on $\ell$, the number of sequences involved.

For  $\ell=1$ the result follows from Lemma~\ref{L:U_2}, and the estimate
\begin{equation}\label{E:trivial}
\limsup_{N\to\infty}\E_{n\in [1,N]}|a(kn)|\leq k \cdot \limsup_{N\to\infty}\E_{n\in [1,N]}|a(n)|.
\end{equation}

To carry out the  inductive step, let $\ell\geq 2$, and suppose that the statement holds for $\ell-1$ sequences.
Following the argument used in the proof of Lemma~\ref{L:U_2},  using the induction hypothesis, and the estimate \eqref{E:trivial},
we get that
 the left hand side in \eqref{E:ell} is bounded by a constant, that depends on $k_1,\ldots, k_\ell$, multiple of
$$
\limsup_{N\to\infty}\E_{r\in [1,N]} \nnorm{S_ra_1\cdot \bar{a}_1}_{U_\ell(\N)}\leq  \nnorm{a_1}^2_{U_{\ell+1}(\N)}
$$
where the last estimate follows from \eqref{E:inductive} and H\"{o}lder's inequality.
Since $\norm{a_1}_\infty\leq 1$, we have
$\nnorm{a_1}_{U_{\ell+1}(\N)}\leq 1$.
This completes the proof.
\end{proof}
\subsubsection{The general case}
We first explain an induction scheme, often called
PET induction (Polynomial Exhaustion Technique), on types of
families of polynomials that was introduced by Bergelson in \cite{Be87a}.

We define the \emph{degree} of  a family $\mathcal{P}$ of non-constant polynomials
to be the maximum of the degrees of the polynomials in the family.
Let $\mathcal{P}_i$ be the subfamily of polynomials of degree $i$ in
$\mathcal{P}$. We let $w_i$ denote the number of distinct leading
coefficients that appear in the family $\mathcal{P}_i$. The vector
$(d,w_d,\ldots,w_1)$ is called the \emph{type} of the family of
polynomials $\mathcal{P}$.
We order the set of all possible types lexicographically, meaning,
$(d,w_d,\ldots,w_1)>(d',w_{d'}',\ldots,w_1')$ if and only if in the
first instance where the two vectors disagree the coordinate of the
first vector is greater than the coordinate of the second vector.
One easily verifies that every decreasing sequence of types is eventually constant,
thus, if some operation reduces the type, then after a finite number of repetitions it
is going to terminate.

Next, we define such an operation:
Let  $\mathcal{P}=(p_1,\ldots,p_\ell)$ be an ordered family of polynomials,
  $p\in \mathcal{P}$, and $r\in\N$.
The family $(p,r)\vdc(\mathcal{P})$ consists of all non-constant polynomials
of the form $p_i-p$, $S_rp_i-p$, $i=1,\ldots, \ell$,
where $S_rp$ is defined by $(S_rp)(n)\mathrel{\mathop:}=p(n+r)$. We order them
so that the polynomial $S_rp_1-p$ appears first.

We call an ordered family of polynomials  $(p_1,\ldots,p_\ell)$ \emph{nice} if
$\text{deg}(p_1)\geq \text{deg}(p_i)$ and
$p_1-p_i$ is non-constant for
$i=2,\ldots, \ell$.
\begin{lemma}\label{L:IndVDC}
  Let $\mathcal{P}=(p_1,\ldots,p_\ell)$ be a  nice ordered family of polynomials, and suppose that $\text{deg}(p_1)\geq 2$.
  Then there exists a polynomial $p\in \mathcal{P}$, such that for every large enough $r\in\N$, the family $(p,r)\vdc(\mathcal{P})$ is nice and has strictly smaller type than that of $\mathcal{P}$.
\end{lemma}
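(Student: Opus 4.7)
My plan is to follow a standard PET (polynomial exhaustion technique) induction in the spirit of Bergelson. Let $d = \deg(p_1) = \deg(\mathcal{P})$ and $e = \min_i \deg(p_i)$. The choice of $p$ splits into three cases: in \emph{Case A} ($e < d$), pick any $p \in \mathcal{P}$ of minimal degree $e$; in \emph{Case B} ($e = d$ but some polynomial in $\mathcal{P}$ has leading coefficient distinct from $\text{lc}(p_1)$), pick $p = p_j$ for some $j \geq 2$ with $\text{lc}(p_j) \neq \text{lc}(p_1)$; in \emph{Case C} ($e = d$ and all polynomials of $\mathcal{P}$ share the leading coefficient of $p_1$), take $p = p_1$.

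To verify niceness of $(p,r)\vdc(\mathcal{P})$ for all sufficiently large $r$, I need the designated first polynomial $S_r p_1 - p$ to have maximum degree in the new family and non-constant difference with every other element. For the degree: in Cases A and B a direct computation gives $\deg(S_r p_1 - p) = d$, and every other element has degree at most $d$. In Case C, every $p_j - p_1$ and $S_r p_j - p_1$ has degree at most $d - 1$ due to common leading coefficients, while $\deg(S_r p_1 - p_1) = d - 1 \geq 1$ (using $\deg(p_1) \geq 2$) with leading coefficient of order $d\cdot r\cdot \text{lc}(p_1)$, dominant for $r$ large. The relevant differences $(S_r p_1 - p) - q$ reduce to polynomials of the form $S_r p_1 - p_j$, $S_r(p_1 - p_j)$, or $S_r p_1 - p_1$; each is non-constant either because $p_1 - p_j$ is non-constant (from the niceness of $\mathcal{P}$) or because $\deg(p_1) \geq 2$, once finitely many accidental lower-order cancellations are ruled out by taking $r$ large enough.

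For the strict type decrease, I compare types lexicographically. Case C yields an immediate drop since $d' = d - 1 < d$. In Case B, the maximum degree stays $d$, but the new degree-$d$ polynomials have leading coefficients $\text{lc}(p_i) - \text{lc}(p)$ for $p_i$ of degree $d$; this effectively removes $\text{lc}(p)$ from the original list and merely shifts the rest by a fixed constant, so $w'_d = w_d - 1$. In Case A, at every degree $j > e$ the vdC operation preserves leading coefficients (since $\deg(p) < j$), giving $w'_j = w_j$; at degree $e$, the same shift-and-remove argument gives $w'_e = w_e - 1$, so the type strictly decreases at position $e$. The main obstacle is the bookkeeping in Case A, where I must verify both that no new polynomials appear at degrees strictly above $e$ and that the count at degree $e$ drops by exactly one; both follow from the observation that $p_i - p$ and $S_r p_i - p$ preserve the leading coefficient of $p_i$ whenever $\deg(p) < \deg(p_i)$, and the flexibility of choosing $r$ large enough suppresses the finitely many accidental lower-order cancellations that could otherwise threaten either niceness or the degree-count computation.
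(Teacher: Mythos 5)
Your proposal makes exactly the same three-case choice of $p$ as the paper (minimal-degree polynomial if degrees differ; a polynomial with a different leading coefficient if only leading coefficients differ; $p_1$ itself otherwise), and the paper then simply asserts that the required properties are "easy to check." Your verification of niceness and of the lexicographic type decrease in each case is correct, so this is the same argument with the routine checks written out.
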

\begin{proof}
If all the polynomials
have the same degree and leading coefficient, then we take $p=p_1$.
If all the polynomials have the same degree and at least one has different leading coefficient than $p_1$, then  we take any such polynomial as $p$.
Otherwise, there exists a non-constant polynomial in $\mathcal{P}$ with degree strictly smaller than the degree of $p_1$. We  take $p$ to
be any such polynomial that has minimal degree.
In all cases, it is easy to check the advertised property.
\end{proof}

\begin{proof}[Proof of Proposition~\ref{P:polies'}]
It suffices to show that the $k$ given in the statement of Proposition~\ref{P:polies'}
depends only on the number $\ell$ and the type $W$ of the family of polynomials   involved.
This is the case because if we fix the degree and the cardinality of a family of polynomials, then there
 are a finite number of possibilities for its  type.

We are going to use induction on the type of the family of polynomials involved.
As our base case we take the case where all the polynomials are linear; then the result was proved in  the previous
subsection with $k=\ell+1$.

Let now $\mathcal{P}$ be a nice  ordered family of $\ell$ polynomials  with $\text{deg}(p_1)\geq 2$ and type $W$, and suppose that the statement
holds for all nice ordered families of $\ell'$ polynomials with type $W'$ strictly smaller than $W$ for some  $k=k(W',\ell')\in \N$.

 Let $p\in \mathcal{P}$ be chosen as in Lemma~\ref{L:IndVDC}.
Using  Corollary~\ref{C:VDC2},  making the change of variables $m\to m-p(n)$, and using that $p(b(N))/N\to 0$, exactly as in the proof of Lemma~\ref{L:U_2},  we get that the limsup as $N\to\infty$ of the averages in \eqref{E:sce}
 is bounded by a constant multiple of
 $$
\limsup_{N\to\infty}\E_{r\in [1,N]}
\limsup_{N\to\infty}\E_{m\in [1, N]}\big|\E_{ n\in [1, b(N)]}
\prod_{i=1}^\ell \bar{a}_{i,N}\big(m+p_i(n+r)-p(n)\big)\cdot a_{i,N}\big(m+p_i(n)-p(n)\big)\big|,
$$
where again for notational convenience we have defined
$a_{1,N}\mathrel{\mathop:}=a_1$ for $N\in\N$.
By Lemma~\ref{L:IndVDC},
 for suitably large  $r\in\N$, the family $(p,r)\vdc(\mathcal{P})$ is nice, has type strictly smaller than $W$, and consists of at most $2\ell$ polynomials. Let
 $$
 k(W,\ell)=\max_{W'<W,\ell'\leq 2\ell}k(W',\ell'),
 $$
 where the maximum is taken over all $\ell'$ with $ \ell'\leq 2\ell$ and possible types $W'$ with $W'<W$ of families consisting of at most $2\ell$ polynomials (there is a finite number of such possible types).
Using the induction hypothesis and the Cauchy-Schwarz inequality, we get that if $\nnorm{a_1}_{U_{k(W,\ell)}(\N)}=0$, then  for every large enough $r\in \N$ we have
$$
\limsup_{N\to\infty}\E_{ m\in [1, N]}\Big|\E_{ n\in [1, b(N)]}
\Big(\prod_{i=1}^\ell \bar{a}_{i,N}\big(m+p_i(n+r)-p(n)\big)\cdot a_{i,N}\big(m+p_i(n)-p(n)\big)\Big)\Big|=0.
$$
This completes the induction and the proof.
\end{proof}
\subsection{Proof of the main results for polynomial averages}
We are now one short step from proving  Theorems~\ref{T:PoliesConv1}  and \ref{T:PoliesChar}.

\begin{proof}[Proof of Theorem~\ref{T:PoliesChar}]
Let $k\in \N$ be such that the conclusion of Proposition~\ref{P:polies} holds.
Without loss of generality we can assume that $\nnorm{f_1}_{k,\mu,T_1}=0$. Then  for $\mu$ almost every $x\in X$ we have
$\nnorm{f_1}_{k,\mu_x,T_1}=0$. Using Proposition~\ref{P:RelToErgodic} we deduce that
for $\mu$ almost every
$x\in X$  we have
$\nnorm{f_1(T_1^nx)}_{U_k(\N)}=0$.
The result now follows by applying  Proposition~\ref{P:polies} to the sequences $a_i\colon \N\to \C$ defined
by $a_i(n)\mathrel{\mathop:}=f_i(T_i^nx)$, $i=1,\ldots,\ell$.
\end{proof}
\begin{proof}[Proof of Theorem~\ref{T:PoliesConv1}]
We assume as we may that $\norm{f_i}_{L^\infty(\mu)}\leq 1$ for $i=1,\ldots,\ell$.
Let $k\in \N$ be the integer given by Theorem~\ref{T:PoliesChar}. Let $\varepsilon>0$.
For $i=1\ldots,\ell$, we use Proposition~\ref{P:ApprNil} to get the decomposition $f_i=f^s_{i,\varepsilon}+f^u_{i,\varepsilon}+f^e_{i,\varepsilon}$, where
$\nnorm{f^u_{i,\varepsilon}}_{k}=0$,   $\norm{f^e_{i,\varepsilon}}_{L^1(\mu)}\leq \varepsilon$,
all functions are bounded by $2$, and
for $\mu$ almost every $x\in X$, the sequence $(f^s_{i,\varepsilon} (T^nx))_{n\in\N}$
is a $(k-1)$-step nilsequence.
Let  $$
A_N(f_i)(x)\mathrel{\mathop:}=\E_{ m\in [1,  N], n\in [1, b(N)]} f_1(T_1^{m+p_1(n)}x)\cdot \ldots\cdot f_\ell(T_\ell^{m+p_\ell(n)}x).
$$
Theorem~\ref{T:ConvNil} implies  that
 the averages $A_N(f^s_{i,\varepsilon})(x)$ convergence pointwise.
 Hence, it suffices  to show  that
when computing the average
    $A_N(f_i)(x)$    the contribution of the functions $f^u_{i,\varepsilon}$ and $f^e_{i,\varepsilon}$
    becomes negligible  as $N\to\infty$ and $\varepsilon$ is taken suitably small.
    Theorem~\ref{T:PoliesChar} implies that the contribution of   the functions  $f^u_{i,\varepsilon}$
is negligible, independently of the choice of $\varepsilon$.
To handle the  contribution of the functions $f^e_{i,k}$ we argue as  in the proof of the corresponding convergence result for
the cubic averages in Section~\ref{SS:2.2}.   Let us just  explain the only point where our argument deviates slightly from the aforementioned  argument. We expand $
A_N(f^s_{i,\varepsilon}+f^e_{i,\varepsilon})$ and write
$
A_N(f^s_{i,\varepsilon}+f^e_{i,\varepsilon})-A_N(f^s_{i,\varepsilon})
$
as a sum  of $2^\ell -1$  averages. We deal with each such average separately, and bound all the functions
by their sup norm
 except  one (chosen arbitrarily)  that
is equal to $f^e_{i,\varepsilon}$ for some $i\in \{1,\ldots,\ell\}$.
Upon doing this, we get the bound
 $$
| A_N(f^s_{i,\varepsilon}+f^e_{i,\varepsilon})(x)-A_N(f^s_{i,\varepsilon})(x)|
\ll_\ell \max_{i=1,\ldots,\ell}\E_{m\in [1,N], n\in [1,b(N)]} |f_{i,\varepsilon}^e|({T_i^{m+p_i(n)}}x)
 $$
That the right hand side  becomes negligible as $N\to\infty$, and $\varepsilon$ is chosen suitably small,
follows (as in the proof given in  Section~\ref{SS:2.2}) upon noticing  that for every
system $(X,\X,\mu, T)$, function $f\in L^\infty(\mu)$, and polynomial $p\in \{p_1,\ldots,p_\ell\}$,
one has for $\mu$ almsot every $x\in X$ that
$$
\lim_{N\to\infty}\E_{ m\in [1, N], n\in [1, b(N)]}
|f|(T^{m+p(n)}x)=
\int |f| \ d\mu_x
$$
where $\mu=\int \mu_x \ d\mu(x)$ is the ergodic decomposition of the measure $\mu$ with respect to $T$.
To get this identity it suffices to make the change of variables $m\to m-p(n)$, use  that
$p(b(N))/N\to 0$ and the ergodic theorem. This completes the proof.
\end{proof}


\begin{thebibliography}{99}

\bibitem{As03} I.~Assani.  Pointwise convergence along cubes for measure preserving systems.
Unpublished manuscript 2003. Available at  arXiv:math/0311274

\bibitem{As09} I.~Assani. Averages along cubes for not necessarily commuting
m.p.t. Ergodic theory and related fields. {\em Contemp. Math.} {\bf 430},
Amer. Math. Soc., Providence, RI, (2007), 1--19.


\bibitem{As}
I.~Assani.
Pointwise convergence of ergodic averages along
cubes.
{\em J. Analyse Math.} {\bf 110} (2010), 241--269.


\bibitem{Bere85} B.~Berend. Joint ergodicity and mixing. {\em J. Analyse Math.}
{\bf 45} (1985), 255--284.


\bibitem{Be87a} V.~Bergelson. Weakly mixing PET. {\em Ergodic Theory
Dynam. Systems} {\bf 7} (1987), no. 3, 337--349.

\bibitem{Be00} V.~Bergelson. The multifarious Poincare recurrence theorem.
 Descriptive set theory and dynamical systems.
{\em London Math. Soc. Lecture Note Series} {\bf 277}, Cambridge Univ. Press, Cambridge,
  (2000),
31--57.



 \bibitem{BHK}
 V.~Bergelson, B.~Host, B.~Kra, with an Appendix
 by I.~Ruzsa.  Multiple recurrence
 and nilsequences.  {\em Inventiones Math.} {\bf 160} (2005), 261-303.

\bibitem{BL}
V.~Bergelson,  A.~Leibman.
Polynomial extensions of van der Waerden's and Szemer\'edi's theorems.
{\em Journal Amer. Math. Soc.} {\bf 9} (1996), 725--753.

\bibitem{BL04}
V.~Bergelson, A.~Leibman. Failure of Roth theorem for solvable groups of exponential growth.
{\em Ergodic Theory Dynam. Systems} {\bf 24} (2004), no. 1, 45--53.


\bibitem{CFH09} Q.~Chu, N.~Franzikinakis, B.~Host.
 Ergodic averages of commuting transformations with distinct degree polynomial iterates.
To appear in {\it Proc. Lond. Math. Soc.}  Available at arXiv:0912.2641


\bibitem{Gow01} W.~Gowers. A new proof of Szemer\'edi's theorem.
{\em Geom. Funct. Anal.} {\bf 11} (2001), 465--588.

\bibitem{GT10a} B.~Green, T.~Tao. An airthmetic regularity lemma, an associated
counting lemma, and applications. Preprint 2010. Available at arXiv:1002.2028

\bibitem{HK04} B.~Host,  B.~Kra. Averaging along cubes. Modern dynamical systems
and applications. {\em Cambridge Univ. Press}, Cambridge, (2004), 123--144.




\bibitem{HKa}
 B.~Host,  B.~Kra.
Nonconventional ergodic averages and nilmanifolds.
{\em Ann. Math.} {\bf 161} (2005), 397-488.

\bibitem{HK05b} B.~Host, B.~Kra.  Convergence of polynomial
  ergodic averages. {\it Isr. J. Math.} {\bf 149} (2005), 1--19.

\bibitem{HK09} B.~Host, B.~Kra. Uniformity seminorms on $l^{\infty}$ and spplications.
{\em J. Analyse Math.} {\bf 108} (2009), 219--276.


\bibitem{JR79} A.~del Junco, J.~Rosenblatt. Counterexamples in ergodic theory and number theory.
{\em Math. Ann.} {\bf 245} (1979), no. 3, 185--197.

\bibitem{KN74} L.~Kuipers, H.~Niederreiter.  Uniform distribution
  of sequences.  Pure and Applied Mathematics. {\em
    Wiley-Interscience}, New York-London-Sydney, (1974).

\bibitem{Lei98} A.~Leibman.  Multiple recurrence theorem for measure preserving actions of a nilpotent group.
{\em Geom. Funct. Anal.} {\bf 8} (1998), 853--931.

\bibitem{Lei02b} A.~Leibman. Lower bounds for ergodic averages.
{\em Ergodic Theory Dynam.
Systems} {\bf 22} (2002), no. 3, 863--872.


\bibitem{Lei05b} A.~Leibman.
Pointwise convergence of ergodic averages for polynomial actions
of $\mathbb{Z}^d$ by translations on a nilmanifold. {\em Ergodic
Theory Dynam. Systems} {\bf 25} (2005), no. 1,  215--225.


\bibitem{Lei05c} A.~Leibman.  Convergence of multiple ergodic
  averages along polynomials of several variables.  {\it Isr. J.
    Math.} {\bf 146} (2005), 303--316.

\bibitem{Sz10} B.~Szegedy. Gowers norms, regularization and limits of functions on abelian groups. Preprint 2010. Available at arXiv:1010.6211





\end{thebibliography}
\end{document}